\documentclass{amsart}
\usepackage{amsmath, amsthm, amsfonts, amssymb}


\usepackage[pdftex]{hyperref}

\hypersetup{ colorlinks=true, linkcolor=blue, filecolor=magenta, urlcolor=cyan, }

\hypersetup{
	colorlinks=true,	
	linkcolor=cyan,		
	citecolor=blue,		
	filecolor=red,		
	urlcolor=magenta	
}

\newcommand{\RR}{\mathbb{R}} 
\newcommand{\CC}{\mathbb{C}} 

\newcommand{\UB}{\mathbb{B}}

\newcommand{\trace}{\mathrm{trace}}



\newcommand{\rank}{\mathrm{rank}\,}

\newcommand{\grad}{\,\mathrm{grad}\,}

\newcommand{\Ric}{\mathrm{Ric}}
\newcommand{\cK}{\mathsf{K}}
\newcommand{\cC}{\mathsf{C}}
\newcommand{\cL}{\mathsf{L}}

\newcommand{\dc}{d^c}

\newcommand{\vV}{\mathcal{V}}

\newcommand{\afrac}[2]{\genfrac{}{}{0pt}{}{#1}{#2}}

\newcommand{\paren}[1]{\left(#1\right)}
\newcommand{\abs}[1]{\left\lvert#1\right\rvert}
\newcommand{\norm}[1]{\left\|#1\right\|}
\newcommand{\set}[1]{\left\{#1\right\}}
\newcommand{\ip}[1]{\left\langle#1\right\rangle}
\newcommand{\lb}[1]{\left[#1\right]}
\newcommand{\pd}[2]{\frac{\partial#1}{\partial#2}}
\newcommand{\spd}[3]{\frac{\partial^2#1}{\partial#2\partial#3}}
\newcommand{\pdl}[2]{\partial#1/\partial#2}

\newcommand{\ind}[4]{{#1}^{\phantom{#2}#3}_{#2\phantom{#3}#4}}

\newcommand{\im}{\sqrt{-1}}

\newcommand{\KE}{K\"ahler-Einstein }

\newtheorem{theorem}{Theorem}[section]
\newtheorem{corollary}[theorem]{Corollary}
\newtheorem{lemma}[theorem]{Lemma}
\newtheorem{proposition}[theorem]{Proposition}
\theoremstyle{definition}

\theoremstyle{remark}
\newtheorem{remark}[theorem]{Remark}

\numberwithin{equation}{section}

\title{On the K\"ahler-hyperbolicity of bounded symmetric domains}

\author{Young-Jun Choi}
\address{Department of Mathematics, 
Pusan National University,
2, Busandaehak-ro 63beon-gil,
Geumjeong-gu, Busan, 46241, Republic of Korea}
\email{youngjun.choi@pusan.ac.kr}

\author{Kang-Hyurk Lee}
\address{Department of Mathematics and Research Institute of Natural Science, 
Gyeongsang National University, 
Jinju, Gyeongnam, 52828, Republic of Korea}
\email{nyawoo@gnu.ac.kr}

\author{Aeryeong Seo}
\address{Department of Mathematics and RIRCM,
Kyungpook National University,
80, Daehak-ro, Buk-gu, Daegu, 41566, Republic of Korea}
\email{aeryeong.seo@knu.ac.kr}

\subjclass[2020]{32M15, 32Q20, 53C55}


\keywords{bounded symmetric domains, the Bergman metric, the complete K\"ahler-Einstein metric, the K\"ahler hyperbolicity}

\thanks{This work was supported by Samsung Science and Technology Foundation under Project Number SSTF-BA2201-01. The first named author was supported by the National Research Foundation of Korea (NRF) grant funded by the Korea government (No.~NRF-2023R1A2C1007227). The third  named author was supported by the National Research Foundation of Korea (NRF) grant funded by the Korea government (No.~NRF-2022R1F1A1063038). }

\begin{document}

\begin{abstract}
In this paper, we characterize the K\"ahler-hyperbolicity length of a bounded symmetric domain, defined by its rank and genus, as a unique constant determined by a constant gradient length of a special Bergman potential. Additionally, we establish a characterization of the lower bound of $L^\infty$ norm of the gradient length of any Bergman potential.
\end{abstract}


\maketitle

\section{Introduction}


Let $X$ be an $n$-dimensional complete K\"ahler manifold and $\omega$ be its K\"ahler form. We say that $\omega$ is \emph{$d$-bounded} if there exists a real $1$-form $\eta$ on $X$ such that 
\begin{equation*}
d\eta=\omega\quad\text{and}\quad \norm{\eta}_{L^\infty} = \sup_X \abs{\eta}_\omega<+\infty
\end{equation*}
where $\abs{\eta}_\omega$ denotes the pointwise length of $\eta$ measured by $\omega$. In \cite{Gromov1991}, Gromov proved that if X is simply connected and admits a $d$-bounded K\"ahler form $\omega$, then 
\begin{equation*}
\dim H^{p,q}_2 (X) = 0 \quad (p+q\neq n)
\end{equation*}
where $H^{p,q}_2(X)$ is the space  of square integrable harmonic $(p,q)$-forms with respect to $\omega$. Moreover, he also proved that if $X$ covers a compact K\"ahler manifold and $\omega$ is a lifting of a K\"ahler form of the quotient, then \begin{equation*}
\dim H^{p,q}_2 (X) = \infty \quad (p+q=n).
\end{equation*}
Subsequently, Donnelly generalized the vanishing result to complete K\"ahler manifolds \cite{Donnelly1994}.

In what follows, we will call a K\"ahler structure to be \emph{K\"ahler hyperbolic} if its K\"ahler  form is $d$-bounded (see \cite{Gromov1991} for the original definition of K\"ahler hyperbolicity).

It is worth mentioning that analogous results in the context of cohomology vanishing theorems were obtained earlier by Donnelly-Fefferman~\cite{Donnelly-Fefferman1983}. They studied $L^2$ cohomology groups of bounded strongly pseudoconvex domains in the complex Euclidean space $\mathbb C^n$ with respect to their Bergman metrics, employing deep analytic tools from several complex variables.
Given these results, it is natural to study the $d$-boundedness of the Bergman metric on pseudoconvex domains in $\mathbb C^n$ when the metric is complete.

For a domain $\Omega$ in $\mathbb C^n$, the Bergman metric is defined by $\omega=dd^c\log K_\Omega$, where $K_\Omega=K_\Omega(z,z)$ is the Bergman kernel function  of $\Omega$. When $\Omega$ is pseudoconvex, it has been studied to establish the boundedness of $\abs{d^c\log K_\Omega}_\omega$, equivalently that of $\abs{\partial\log K_\Omega}_\omega$,  in order to obtain the K\"ahler-hyperbolicity of the Bergman metric (\cite{Donnelly1994,Donnelly1997,McNeal2002,Chen2004,Yeung2006}).

For a bounded homogeneous domain $\Omega$,  Donnelly  proved in \cite{Donnelly1997} the uniform boundedness of  $\abs{\partial\log K_\Omega}_\omega$, the \emph{gradient length} of $\log K_\Omega$, using Gindikin's analysis (\cite{Gindikin1964}) on holomorphic automorphism groups. 
Later, in \cite{Kai-Ohsawa2007}, Kai-Ohsawa obtained the same result by a simple and elementary approach, taking advantage of its realization as a Siegel domain. Their idea can be outlined as follows: through the Cayley transform, each $\Omega$ is biholomorphic to a homogeneous Siegel domain $D$ of the second kind. Since $D$ is affine homogeneous (\cite{KMO1970}) and the Bergman metrics are invariant under biholomorphisms, the K\"ahler potential $\log K_D$ of the Bergman metric on $D$ has then constant gradient length. Pulling back the potential $\log K_D$ of $D$ by the Cayley transform to $\Omega$ gives a K\"ahler potential of the Bergman metric on $\Omega$ with constant gradient length.

Interestingly, in \cite{Kai-Ohsawa2007}, Kai and Ohsawa showed that if a K\"ahler potential $\varphi$ of the Bergman metric on a bounded homogeneous domain has constant gradient length, then this length is uniquely determined, that is, it is the same as $\abs{\partial\log K_D}_\omega$.
They also posed the following natural question:
\begin{center}
\emph{How does the length of $\partial\varphi$ depend on $\Omega$?}
\end{center}

\medskip

The aim of this paper is to generalize their result for local K\"ahler potential of the Bergman metric and to provide an answer to this question when $\Omega$ is a bounded symmetric domain of higher rank, possibly in the reducible case, and to explicitly describe its constant length (Theorem~\ref{thm:main1}). Note that for the unit ball, this question was already resolved by Lee-Seo~\cite[Theorem~3.3]{LS2024}. We also show that this constant length is the minimal value of $\norm{\partial\varphi}_{L^\infty}$ for a K\"ahler potential $\varphi$ of the Bergman metric on bounded symmetric domains (Theorem~\ref{thm:main2}).

\subsection{The K\"ahler-hyperbolic length and the rigidity of constant gradient length}
Let $\Omega$ be an irreducible bounded symmetric domain in $\CC^n$, especially a Cartan/Harish-Chandra embedding of the corresponding Hermitian symmetric space of noncompact type, and let $N_\Omega$ be its generic norm. The Bergman kernel $K_\Omega$ of $\Omega$ is  then of the form 
\begin{equation*}
K_\Omega(z,w) 
	= c N_\Omega(z,w)^{-c_\Omega}
\end{equation*}
for a normalizing constant $c$ by the Euclidean volume of $\Omega$ and the genus $c_\Omega$ of $\Omega$ which is a positive integer. Then the Bergman metric $\omega=dd^c\log K$ is the complete \KE metric with Ricci curvature $-1$, that is, $\Ric(\omega)=-\omega$.

Now we define the \emph{K\"ahler-hyperbolicity length} $\cL_\Omega$ of $(\Omega,\omega)$ by
\begin{equation*}
\cL_\Omega = \sqrt{rc_\Omega}
\end{equation*}
where $r$ is the rank of $\Omega$, the dimension of a maximal totally geodesic polydisc in $\Omega$. If a bounded symmetric domain $\Omega$ is biholomorphic to a product $\Omega_1\times\cdots\times\Omega_s$ of irreducible bounded symmetric domains, then we set
\begin{equation*}
\cL_\Omega 
	= \paren{\sum_{j=1}^s \cL_{\Omega_j}^2}^{1/2} 
	=\paren{ \sum_{j=1}^s r_j c_{\Omega_j} }^{1/2}
\end{equation*}
where each $r_j$ and $c_{\Omega_j}$ are the rank and the genus of $\Omega_j$.

The first main theorem of this paper answers the question posed in \cite{Kai-Ohsawa2007} in a more general context, specifically for the local K\"ahler potentials of the Bergman metrics.

\begin{theorem}\label{thm:main1}
Let $\Omega$ be a bounded symmetric domain with the Bergman metric $\omega$. If there is a local potential function $\varphi$ of $\omega$ with constant gradient length, then the length is the same as the K\"ahler-hyperbolicity length $\cL_\Omega$ of $(\Omega,\omega)$:
\begin{equation*}
\abs{\partial\varphi}_\omega\equiv\cL_\Omega\;.
\end{equation*}
\end{theorem}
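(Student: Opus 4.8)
The plan is to reduce the statement to a computation of the Taylor jet of $Q:=\abs{\partial\varphi}_\omega^2$ at the centre of symmetry. Since $\Aut(\Omega)$ acts transitively and every $\gamma\in\Aut(\Omega)$ preserves $\omega$, it preserves pointwise gradient lengths; hence $\gamma^*\varphi$ is again a local potential of $\omega$ with the same constant gradient length. Choosing $\gamma$ so that the centre $o$ of the Harish--Chandra realization lies in the domain of $\gamma^*\varphi$, I may assume $\varphi$ is defined near $o$. Writing $\varphi=\log K+h$ with $h$ pluriharmonic, the $(1,0)$-form $\beta:=\partial h$ is holomorphic and closed, so that $\partial\varphi=\partial\log K+\beta$ and $Q=\abs{\partial\log K+\beta}_\omega^2$. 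The task becomes: the hypothesis $Q\equiv c^2$ forces $c^2=\cL_\Omega^2$.

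Next I would expand $Q$ about $o$. Because $z\mapsto-z$ is an automorphism fixing $o$, the real-analytic function $\log K$ is even; together with $K=cN_\Omega^{-c_\Omega}$ this pins down its low-order jet: the pure holomorphic and antiholomorphic parts are constant, the bidegree $(1,1)$ part is $\sum g_{i\bar j}(0)z^i\bar z^j$, and the first nonconstant mixed terms beyond that have bidegree $(2,2)$, $(3,1)$, $(1,3)$ and encode the curvature $R(0)$ of the Bergman metric. Feeding this into $Q$ and demanding that all Taylor coefficients of $Q-c^2$ vanish, I extract three layers of information: the bidegree $(0,0)$ coefficient gives $c^2=\abs{\beta(o)}_\omega^2$; the bidegree $(1,0)$ coefficient expresses the holomorphic $1$-jet $\nabla\beta(o)$ in terms of $\beta(o)$ and $g(0)$; and the bidegree $(1,1)$ coefficient yields a scalar identity coupling $\beta(o)$, $\nabla\beta(o)$ and $R(0)$. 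This mirrors exactly the rank-one model on the disc, where the analogous $(0,0)$, $(1,0)$ and $(1,1)$ matchings read $\abs{a_0}^2=2c^2$, $2a_0+a_1\overline{a_0}=0$ and $4+\abs{a_1}^2-2\abs{a_0}^2=0$, and combine to force $c^2=2=\cL_{\mathbb D}^2$.

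Then I substitute the lower relations into the bidegree $(1,1)$ identity and contract with $g^{i\bar j}(0)$. The surviving curvature contraction is evaluated using the polydisc theorem: along a maximal totally geodesic polydisc the Bergman metric restricts to $c_\Omega$ times a sum of $r$ Poincar\'e metrics, and each of the $r$ disc factors contributes $c_\Omega$ to the trace (precisely the rank-one computation above, rescaled by the genus). This produces $c^2=r c_\Omega=\cL_\Omega^2$ in the irreducible case, independently of the remaining freedom in the jet of $\beta$. For a reducible $\Omega=\Omega_1\times\cdots\times\Omega_s$ the potential $\log K$ is a sum of functions of the separate variable blocks, so $g(0)$ and the curvature are block diagonal with no cross terms; the $(1,1)$ identity then decouples over the factors and $c^2=\sum_{j=1}^s r_j c_{\Omega_j}=\cL_\Omega^2$, as claimed.

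The main obstacle is this curvature step: showing that the bidegree $(1,1)$ matching, after eliminating $\nabla\beta(o)$, forces $c^2$ to the single value $r c_\Omega$ regardless of the admissible choices of $\beta(o)$ and of the direction data. This is where the fine structure of a bounded symmetric domain (the restricted root system, the polydisc decomposition, and the explicit generic norm $N_\Omega$ governing the $(2,2)$-jet of $\log K$) must be used, since the holomorphic sectional curvature is genuinely anisotropic in higher rank and a naive direction-by-direction estimate does not close.
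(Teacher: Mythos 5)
Your reduction to the origin and the splitting $\varphi=\log K_\Omega+h$ with $\beta=\partial h$ holomorphic are fine, and your three disc relations are exactly right; but the step you yourself flag as the ``main obstacle'' is the entire theorem, and the mechanism you sketch for it does not work. Write $a=\beta(o)$ and $b=\nabla\beta(o)$ (a symmetric $n\times n$ matrix). In dimension $n>1$ the bidegree $(1,0)$ matching reads $g^{\alpha\bar\beta}(0)\,b_{\alpha\gamma}\overline{a_\beta}=-a_\gamma$: it pins only the single contraction of $b$ against $\bar a$, not the full $1$-jet of $\beta$, so ``eliminating $\nabla\beta(o)$'' is not available (it is available only for $n=1$, which is why your disc computation closes). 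Worse, contracting the bidegree $(1,1)$ matrix identity with $g^{i\bar j}(0)$ collapses the curvature quartic $R(a,\bar a)$ to its Ricci trace, and since the Bergman metric is K\"ahler--Einstein this yields precisely $\cK c^2=n+\norm{b}_g^2$ --- the pointwise Bochner identity $\triangle_\omega\abs{\partial\varphi}_\omega^2=\varphi_{\alpha;\beta}\varphi^{\alpha;\beta}+n-\cK\abs{\partial\varphi}_\omega^2$ that the paper itself uses in Proposition~\ref{prop:ball case}. This is one scalar equation in the two unknowns $c^2$ and $\norm{b}_g^2$, and the trace that appears is the dimension $n$, not $rc_\Omega$: the rank and genus are exactly the information the trace destroys. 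The subsequent ``decoupling along a maximal polydisc'' also fails as stated, because $b$ in general couples polydisc directions to transverse directions and nothing forces the jet data to be adapted to any polydisc. The untraced $(1,1)$ matrix system may well pin $c^2$ (on $\Delta^n$ it does, by a commutation argument with the diagonal curvature), but for a general higher-rank domain that is a genuinely hard algebraic problem in the full curvature quartic, and your outline supplies no argument for it. (A minor point: in the Harish--Chandra realization the circle action $z\mapsto e^{i\theta}z$ forces the jet of $\log K_\Omega$ at $o$ to have only bidegree $(k,k)$ pieces, so the $(3,1)$ and $(1,3)$ terms you posit vanish; this is harmless.)

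The paper's proof avoids one-point jet analysis entirely, and it is worth seeing why its mechanism cannot be replaced by finite jets at a point. From $\abs{\partial\varphi}_\omega^2\equiv\cC$ it derives $\nabla_v\vV=v$ and $\nabla_{\overline\vV}\vV=-\vV$ for $\vV=\grad(\varphi)$, so the distribution spanned by $\vV$ is involutive with totally geodesic holomorphic leaves of constant Gaussian curvature $-2/\cC$ (Theorem~\ref{thm:integral curve of gradient vector field}); the polydisc theorem places a leaf $S$ inside a maximal polydisc; and the decisive step, Lemma~\ref{lem:rank}, shows $S$ has full rank $r$ by a maximum-principle argument: inside a suitable bidisc the function $\xi\mapsto\abs{\pdl{\varphi_D}{\zeta}}^2$ is subharmonic, attains an interior maximum along $S$ because $\abs{\partial\varphi}_\omega^2\leq\cC$ \emph{on a whole neighborhood} with equality on $S$, and the resulting degeneration contradicts positivity of the metric in the transverse factor. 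That argument consumes the hypothesis on an open set, not merely its Taylor coefficients at one point, and it is what excludes the lower-rank leaves (curvature $-\cK/k$, $k<r$) that your low-order jet equations have no stated means of ruling out. Once full rank is known, Proposition~\ref{prop:totally geodesic disc} gives $\kappa=-2\cK/\cL_\Omega^2$ and hence $\cC=\cL_\Omega^2/\cK$. To rescue your approach you would need either to solve the full untraced $(1,1)$ matrix system for each domain type (using the fine structure of the curvature, essentially redoing the unit-ball analysis of Lee--Seo in all ranks), or to import an analogue of Lemma~\ref{lem:rank}; as written, the proposal has a genuine gap at its central step.
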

We will prove this theorem by showing that the constant length is indeed determined by the Gaussian curvature of a totally geodesic holomorphic disc in $\Omega$ which is an integrable submanifold tangential to the holomorphic distribution generated by the gradient vector field of the potential function.


\subsection{The optimal value for the $L^\infty$ norm of Bergman potential's gradient length} 
Let $X$ be a universal covering of a compact K\"ahler manifold $Y$ and $\omega$ be a lifting of a K\"ahler form of $Y$. Suppose that $\omega$ is $d$-bounded, i.e. there is  a global $1$-form $\eta$ with $d\eta=\omega$ and $\norm{\eta}_{L^\infty}<+\infty$.  Then in Theorem 1.4.A of \cite{Gromov1991}, Gromov showed that the first eigenvalue $\lambda$ of the Laplace operator $\triangle=d\delta+\delta d$ in the space of square integrable $(p,q)$-forms for $p+q\neq n$ has the following lower bound:
\begin{equation*}
\lambda \geq \frac{c_n}{\norm{\eta}_{L^\infty}^2}
\end{equation*}
where $c_n$ is a constant depending only on the dimension of $X$.

When we consider the bounded symmetric domain $\Omega$, the Bergman metric $\omega$ is a canonical lifting of a \KE metric of a compact quotient of $\Omega$. Therefore it is natural to find the minimal value of $\norm{\eta}_{L^\infty}$ to sharpen the lower bound estimate. Here, we will show that the minimal value is determined by the K\"ahler-hyperbolicity length $\cL_\Omega$ of $(\Omega,\omega)$.
\begin{theorem}\label{thm:main2}
Let $\Omega$ be a bounded symmetric domain and $\omega$ be its Bergman metric. 
For any global potential function $\varphi\colon\Omega\to\RR$, the following inequality holds.
\begin{equation}\label{lower bound}
\norm{\partial\varphi}_{L^\infty} \geq \cL_\Omega \;.
\end{equation}
\end{theorem}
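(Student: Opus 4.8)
The plan is to compare an arbitrary global potential $\varphi$ with the canonical potential $\log K_\Omega$ and to exploit the compact isotropy symmetry at the centre of the Harish-Chandra realization. Since $dd^c\varphi=\omega=dd^c\log K_\Omega$, the difference $\varphi-\log K_\Omega$ is pluriharmonic on $\Omega$; as $\Omega$ is a bounded convex circular domain in its Harish-Chandra realization, hence simply connected, I would write $\varphi-\log K_\Omega=\RE h$ for a holomorphic function $h$ on $\Omega$, so that $\partial\varphi=\partial\log K_\Omega+\tfrac12\partial h$. The goal then becomes the bound $\sup_\Omega\abs{\partial\varphi}_\omega^2\geq\cL_\Omega^2$.

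Next, let $K\subset\Aut(\Omega)$ be the isotropy subgroup of the origin, a maximal compact subgroup acting linearly and unitarily on $\CC^n$. Because $\omega$ and $\log K_\Omega$ are both $K$-invariant and each $k\in K$ is an isometry, I have $\abs{\partial\varphi}_\omega^2(kz)=\abs{\partial\log K_\Omega+\tfrac12\partial(h\circ k)}_\omega^2(z)$. Averaging this identity over $K$ against the Haar measure and expanding the square, the cross term becomes $\RE\ip{\partial\log K_\Omega,\tfrac12\partial\bar h}_\omega$ with $\bar h=\int_K h\circ k\,dk$. The averaged function $\bar h$ is holomorphic and $K$-invariant, hence invariant under the central circle $z\mapsto e^{i\theta}z$, and therefore constant; so the cross term vanishes and $\int_K\abs{\partial\varphi}_\omega^2(kz)\,dk\geq\abs{\partial\log K_\Omega}_\omega^2(z)$. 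Since the left-hand side is an average of values of $\abs{\partial\varphi}_\omega^2$, it is at most $\sup_\Omega\abs{\partial\varphi}_\omega^2$; letting $z$ vary yields $\sup_\Omega\abs{\partial\varphi}_\omega^2\geq\sup_\Omega\abs{\partial\log K_\Omega}_\omega^2$.

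It then remains to show $\sup_\Omega\abs{\partial\log K_\Omega}_\omega^2\geq\cL_\Omega^2$, which I would obtain by restricting to a maximal totally geodesic polydisc $P\cong\Delta^r$. Using $K_\Omega=cN_\Omega^{-c_\Omega}$ together with the standard restriction $N_\Omega|_P=\prod_{j=1}^r(1-\abs{z_j}^2)$, one finds $\log K_\Omega|_P=\text{const}-c_\Omega\sum_j\log(1-\abs{z_j}^2)$, whence a direct one-variable computation gives $\abs{\partial(\log K_\Omega|_P)}_{\omega|_P}^2=c_\Omega\sum_j\abs{z_j}^2$. Since the norm of a covector with respect to the full metric dominates its norm on a subspace, one has $\abs{\partial\log K_\Omega}_\omega^2\geq\abs{\partial(\log K_\Omega|_P)}_{\omega|_P}^2$ along $P$, and letting each $\abs{z_j}\to1$ along the distinguished boundary pushes the right-hand side to $rc_\Omega=\cL_\Omega^2$. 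This is the same polydisc mechanism underlying Theorem~\ref{thm:main1}. The reducible case follows factor by factor, as both the potential and the gradient length split as sums over the irreducible factors and $K$ splits as a product whose invariant holomorphic functions are again constant.

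The main obstacle I expect is this last step: justifying that the Bergman metric and potential restrict to the maximal polydisc with the genus-weighted product Poincar\'e form, and that the distinguished-boundary limit of $\abs{\partial\log K_\Omega}_\omega^2$ equals $rc_\Omega$. This rests on the Jordan-triple/Harish-Chandra description of $N_\Omega$ along a frame of tripotents, together with the total geodesy of $P$; once these structural facts are in hand, the averaging argument makes the reduction to $\log K_\Omega$ essentially formal. A secondary point to verify carefully is that $K$-invariance genuinely forces the averaged holomorphic function $\bar h$ to be constant, for which the presence of the scalar circle in $K$ is the crucial feature.
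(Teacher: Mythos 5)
Your proposal is correct, but it takes a genuinely different route from the paper. The paper never compares $\varphi$ with the canonical potential: it reduces Theorem~\ref{thm:main2} to the K\"ahler--Einstein formulation (Statement~(\ref{thm:main KE2}) of Theorem~\ref{thm:main KE}), restricts $\varphi$ to a single totally geodesic holomorphic disc of full rank $r$ inside a maximal polydisc---whose induced metric is Poincar\'e with Gaussian curvature $-2\cK/\cL_\Omega^2$ by the genus computation of \S\ref{sec:proof}---and then proves the sharp estimate for an \emph{arbitrary} potential on the disc/ball (Proposition~\ref{prop:ball case}) by PDE methods: the Bochner-type identity $\triangle_\omega\abs{\partial\varphi}_\omega^2=\varphi_{\alpha;\beta}\varphi^{\alpha;\beta}+n-\cK\abs{\partial\varphi}_\omega^2$, a maximum-principle argument at an interior maximum, and a potential-rescaling normal-families argument when the supremum is not attained. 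You instead remove the arbitrariness of $\varphi$ once and for all at the level of $\Omega$: the pluriharmonic difference $\varphi-\log K_\Omega=\RE h$ (legitimate, since the Harish--Chandra realization is convex, hence simply connected) plus averaging over the isotropy group---only the circle $z\mapsto e^{i\theta}z$ is really needed, and it does force the invariant holomorphic average to be constant, in the reducible case as well via the diagonal circle---yields the clean intermediate inequality $\sup_\Omega\abs{\partial\varphi}_\omega\geq\sup_\Omega\abs{\partial\log K_\Omega}_\omega$, after which only the explicit computation $\abs{\partial(\log K_\Omega|_P)}_{\omega|_P}^2=c_\Omega\sum_j\abs{z_j}^2\to rc_\Omega$ remains; that computation, the monotonicity $\abs{\partial(\varphi|_P)}_{\omega|_P}\leq\abs{\partial\varphi}_\omega$ from \S\ref{subsec:submanifold}, and the structural input $N_\Omega|_P=\prod_j(1-\abs{z_j}^2)$ (equivalent to the paper's $K_\Omega=d_1K_{\Delta^r}^{c_\Omega/2}$ on $\Delta^r$) are all sound. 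Two small points you should make explicit: the identity $\abs{\partial\varphi}_\omega^2(kz)=\abs{\partial\log K_\Omega+\tfrac12\partial(h\circ k)}_\omega^2(z)$ uses that every automorphism fixing $0$ is \emph{linear} (Cartan's theorem on circular domains), so $\log K_\Omega\circ k-\log K_\Omega$ equals the constant $-2\log\abs{\det k}$ and contributes nothing under $\partial$; and the $K$-invariance of the averaged function uses right-invariance of the Haar measure. As for what each approach buys: yours is more elementary and self-contained---no Bochner formula, no rescaling compactness---and it proves the stronger comparison against $\sup_\Omega\abs{\partial\log K_\Omega}_\omega$; the paper's route is less tied to circularity and the canonical potential, since its disc-level estimate holds for every potential of the K\"ahler--Einstein metric with any normalization $-\cK$, which is what lets the same machinery (Lemma~\ref{lem:rank}, Proposition~\ref{prop:ball case}) serve simultaneously for the rigidity statement of Theorem~\ref{thm:main1}.
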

By Theorem~\ref{thm:main1} and the result of Kai-Ohsawa \cite{Kai-Ohsawa2007}, we have a global potential satisfying the equality in \eqref{lower bound}. Consequently, $\cL_\Omega$ is the minimal value for the $L^\infty$ norms of the gradients of K\"ahler potentials of $\omega$.

For $\eta=d^c\varphi$, where $d^c = \frac{\im}{2}(\bar\partial-\partial)$, we have $d\eta = \omega$
and 
\begin{equation*}
\norm{\eta}_{L^\infty} = \frac{1}{\sqrt{2}}\norm{\partial\varphi}_{L^\infty},
\end{equation*}
since $2\abs{\eta}_\omega^2=\abs{\partial\varphi}_\omega^2$. 

Meanwhile, when a $1$-form $\eta$ with $d\eta=\omega$ is given and $\eta$ is $d^c$-closed, we can solve $d^c\varphi=\eta$ since $\Omega$ is contractible. Thus Theorem~\ref{thm:main2} implies that
\begin{corollary}\label{cor:estimate of length}
Let $\eta$ be a global $1$-form on a bounded symmetric domain $\Omega$. If $d\eta$ is the K\"ahler form of the Bergman metric and $d^c\eta=0$, then
\begin{equation*}
\norm{\eta}_{L^\infty}^2 \geq \frac{\cL_\Omega^2}{2}\;.
\end{equation*}
\end{corollary}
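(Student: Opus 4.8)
The plan is to deduce Corollary~\ref{cor:estimate of length} directly from Theorem~\ref{thm:main2} by manufacturing, out of the given $1$-form $\eta$, a global real-valued potential of $\omega$. The hypotheses say precisely that $d\eta=\omega$ and that $\eta$ is $d^c$-closed, so the task is to upgrade $d^c\eta=0$ to $d^c$-exactness. First I would record that a bounded symmetric domain $\Omega$, in its Harish-Chandra realization, is convex and hence contractible. Since $(d^c)^2=0$ and $d^c$ is conjugate to $d$ through the (invertible, degree-preserving) action of the complex structure on forms, the $d^c$-complex obeys a Poincar\'e lemma, so on the contractible domain $\Omega$ its cohomology vanishes in positive degree. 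Consequently every $d^c$-closed $1$-form is $d^c$-exact, and because $\eta$ is real while $d^c$ preserves real forms, I may solve
\begin{equation*}
d^c\varphi=\eta
\end{equation*}
with a real function $\varphi\colon\Omega\to\RR$.

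With $\varphi$ in hand the remaining steps are bookkeeping. Applying $d$ gives $dd^c\varphi=d\eta=\omega$, so $\varphi$ is a global potential of the Bergman metric and therefore falls under the scope of Theorem~\ref{thm:main2}, which yields $\norm{\partial\varphi}_{L^\infty}\geq\cL_\Omega$. Finally I would convert this into a bound on $\eta$ through the pointwise identity $2\abs{\eta}_\omega^2=\abs{\partial\varphi}_\omega^2$ recorded before the statement, which is immediate from $\eta=d^c\varphi=\frac{\im}{2}(\bar\partial\varphi-\partial\varphi)$ together with $\abs{\bar\partial\varphi}_\omega=\abs{\partial\varphi}_\omega$ for real $\varphi$. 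Passing to suprema,
\begin{equation*}
\norm{\eta}_{L^\infty}^2=\frac{1}{2}\norm{\partial\varphi}_{L^\infty}^2\geq\frac{1}{2}\cL_\Omega^2,
\end{equation*}
which is the asserted inequality.

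The only step carrying real content is the solvability of $d^c\varphi=\eta$; everything afterward is a formal consequence of Theorem~\ref{thm:main2}. I expect the main obstacle to be a clean justification of the $d^c$-Poincar\'e lemma in this setting, namely checking that conjugation by the complex structure genuinely intertwines $d$ and a constant multiple of $d^c$ at the level of the full form complexes, so that contractibility transfers from de Rham to $d^c$-cohomology. A fully explicit alternative avoids this: since $dd^c\log K_\Omega=\omega$, the difference $\alpha=\eta-d^c\log K_\Omega$ is both $d$-closed and $d^c$-closed, so on the simply connected $\Omega$ one writes $\alpha=dg$ and the extra condition $d^c\alpha=0$ forces $\partial\bar\partial g=0$; a pluriharmonic $g$ on a simply connected domain is the real part of a holomorphic function $h$, and a short computation shows $dg=d^c(-2\IM h)$, whence $\varphi=\log K_\Omega-2\IM h$ realizes $d^c\varphi=\eta$ concretely.
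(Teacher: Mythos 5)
Your proposal is correct and follows essentially the same route as the paper, which likewise solves $d^c\varphi=\eta$ using the contractibility of $\Omega$ and then applies Theorem~\ref{thm:main2} together with the identity $2\abs{\eta}_\omega^2=\abs{\partial\varphi}_\omega^2$. Your two justifications of the solvability step (the $d^c$-Poincar\'e lemma via conjugating $d$ by the complex structure, and the explicit construction $\varphi=\log K_\Omega-2\IM h$ from a pluriharmonic primitive) are both sound and simply make explicit what the paper leaves to the remark that $\Omega$ is contractible.
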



\subsection{Organization of proofs using the complete \KE metric}
We will prove the main theorems by reducing the problems to certain totally geodesic complex submanifolds of the given bounded symmetric domain. The restriction of the metric to such submanifold is not its Bergman metric, but still its complete \KE metric. For transparent statements, we will deal with the complete \KE metric $\omega$ of $\Omega$ with normalized Ricci curvature $-\cK$.

Since the Bergman metric of bounded homogeneous domain is \KE with Ricci curvature $-1$, the metric given by
\begin{equation*}
\omega = \frac{1}{\cK}dd^c\log K_\Omega
\end{equation*}
is the unique complete \KE metric of $\Omega$ with Ricci curvature $-\cK$ by Yau's Schwarz lemma (\cite{Yau1978-1}). 
Therefore, to prove Theorem~\ref{thm:main1} and Theorem~\ref{thm:main2}, it suffices to establish the following equivalent theorem.
\begin{theorem}\label{thm:main KE}
Let $\omega$ be the complete \KE metric of a bounded symmetric domain $\Omega$ with Ricci curvature $-\cK$: $\Ric(\omega)=-\cK\omega$.
\begin{enumerate}
\item\label{thm:main KE1} If there is a local potential $\varphi$ of $\omega$ with constant gradient length, then 
\begin{equation*}
\abs{\partial\varphi}_\omega^2 \equiv \frac{\cL_\Omega^2}{\cK} \;.
\end{equation*}
\item\label{thm:main KE2} For any global potential $\varphi$ of $\omega$, we have 
\begin{equation*}
\sup_\Omega\abs{\partial\varphi}_\omega^2 \geq \frac{\cL_\Omega^2}{\cK} \;.
\end{equation*}
\end{enumerate}
\end{theorem}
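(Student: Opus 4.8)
The plan is to prove the equivalent Theorem~\ref{thm:main KE}, reducing \emph{both} assertions to a one‑dimensional computation on a totally geodesic holomorphic disc. Write $\omega=dd^c\varphi=i\partial\bar\partial\varphi$ locally, so $g_{i\bar j}=\partial_i\partial_{\bar j}\varphi$, and introduce the $(1,0)$–gradient field $Z=Z^i\partial_i$ with $Z^i=g^{i\bar j}\partial_{\bar j}\varphi$; then $\abs{\partial\varphi}_\omega^2=\abs{Z}^2$. The decisive elementary identity, valid for \emph{any} potential, is $\nabla_k Z^i=\delta^i_k$ (immediate from $\nabla_k\partial_{\bar j}\varphi=\partial_k\partial_{\bar j}\varphi=g_{k\bar j}$ and $\nabla g=0$ for the Chern connection $\nabla$); in particular $\nabla_Z Z=Z$. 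The one–dimensional model I will use throughout is the complete hyperbolic disc of Gaussian curvature $-\mu$: on it the potential of constant gradient length is a scaled Busemann function (e.g. $-\tfrac{2}{\mu}\log\IM t$ in the upper half–plane $\HP$), and a direct computation gives gradient length $2/\mu$. Thus \emph{for a disc, the constant gradient length $c$ and the Gaussian curvature $-2/c$ are two names for the same datum}.

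For the first assertion, assume $\abs{\partial\varphi}_\omega^2\equiv c$. Writing $\abs{\partial\varphi}_\omega^2=\partial_i\varphi\,Z^i$ and differentiating with $\nabla Z=\mathrm{Id}$ gives $\varphi_{;ik}Z^i=-\partial_k\varphi$ for the holomorphic Hessian $\varphi_{;ik}=\nabla_k\nabla_i\varphi$; conjugating yields $\nabla_{\bar Z}Z=-Z$. Hence the real gradient $\xi=Z+\bar Z$ satisfies $\nabla_\xi\xi=0$ and the real $2$–plane $\mathrm{span}\{\xi,J\xi\}$ is $\nabla$–parallel along itself. Therefore this plane field is integrable and its leaves are totally geodesic holomorphic discs $\Sigma$; since $\Omega$ is complete each $\Sigma$ is a complete totally geodesic disc, hence — being a $2$–dimensional symmetric subspace — of constant negative curvature. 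As $Z$ is tangent to $\Sigma$, the restriction $\varphi|_\Sigma$ is a potential of the induced complete metric with the \emph{same} constant gradient length $c$, so by the model $\Sigma$ has Gaussian curvature $-2/c$; equivalently, $\Sigma$ realizes the holomorphic sectional curvature $-2/c$ of $\Omega$ in the gradient direction.

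It remains to identify this curvature. By the polydisc theorem $\Omega$ contains a maximal totally geodesic polydisc isometric to a product of $r$ discs of equal curvature, the minimal holomorphic sectional curvature of $\Omega$, which for $\Ric(\omega)=-\cK\omega$ equals $-2\cK/c_\Omega$ (normalized via the rank–one case, the complex ball, where this is the constant holomorphic sectional curvature $-2\cK/(n+1)$ and $c_\Omega=n+1$). A complete totally geodesic holomorphic disc is a diagonal of a rank–$k$ sub–polydisc, of curvature $-2\cK/(k c_\Omega)$, and the extremal (least negative) value $-2\cK/(r c_\Omega)=-2\cK/\cL_\Omega^2$ is attained exactly for a \emph{regular} direction ($k=r$). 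The crux — and the step I expect to be the main obstacle — is to prove that the gradient leaf must be such a maximal disc: the rigidity $\nabla Z=\mathrm{Id}$ together with the non–degeneracy of $\omega$ forces $Z$ to engage all $r$ strongly orthogonal directions through each point (a deficient component would degenerate the metric, as is transparent in the reducible/product model), while the constant–length, equivalently constant–curvature, condition forces these components to be balanced; this must then be propagated to the irreducible case and aggregated across irreducible factors so that the relevant constant is $\cL_\Omega^2=\sum_j r_j c_{\Omega_j}$. Granting regularity, the leaf has curvature $-2\cK/\cL_\Omega^2$, and the one–dimensional relation gives $c=\cL_\Omega^2/\cK$.

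For the second assertion, fix the diagonal $\Sigma_0$ of a maximal polydisc; it is totally geodesic with curvature $-2\cK/\cL_\Omega^2$. Since restricting a covector to a subspace cannot increase its dual norm, $\sup_\Omega\abs{\partial\varphi}_\omega^2\ge\sup_{\Sigma_0}\abs{\partial(\varphi|_{\Sigma_0})}^2_{\omega|_{\Sigma_0}}$, and $\varphi|_{\Sigma_0}$ is a potential of the complete hyperbolic metric of curvature $-2\cK/\cL_\Omega^2$ on $\Sigma_0$. The claim then follows from the one–dimensional lemma: \emph{on a complete hyperbolic disc of curvature $-\mu$, every potential $\psi$ satisfies $\sup\abs{\partial\psi}^2\ge 2/\mu$, with equality for the Busemann potential.} Applied with $\mu=2\cK/\cL_\Omega^2$ this gives $\sup_\Omega\abs{\partial\varphi}_\omega^2\ge\cL_\Omega^2/\cK$. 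To prove the lemma I would write $\psi=\psi_0+\RE H$ with $\psi_0$ the Busemann potential and $H$ holomorphic; in the $\HP$–model one gets the clean formula $\abs{\partial\psi}^2=\tfrac{1}{4c_0}\abs{\IM t\,H'(t)+4 i c_0}^2$ with $c_0=1/(2\mu)$, whose supremum is $\ge 4c_0=2/\mu$. The remaining obstacle is the boundary analysis of $H'$: if $H'$ stays bounded near some boundary arc the bound is immediate from $\IM t\to 0$, and the complementary case is excluded because a harmonic function cannot tend to $-\infty$ along the entire boundary (equivalently, $\psi_0$ grows at the Busemann rate $\sqrt{2/\mu}$ along every geodesic ray and a harmonic perturbation cannot depress this rate along all rays simultaneously).
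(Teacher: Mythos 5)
Your part~(1) reproduces the paper's first reduction faithfully: the identities $\nabla_vZ=v$ and $\nabla_{\bar Z}Z=-Z$, the involutivity of the gradient plane field, and the conclusion that each leaf is a totally geodesic holomorphic curve of constant curvature $-2/c$ are exactly the paper's Proposition~\ref{prop:on gradient vector field} and Theorem~\ref{thm:integral curve of gradient vector field}. But the step you yourself flag as ``the crux'' and then assume (``Granting regularity\dots'') is precisely the paper's key Lemma~\ref{lem:rank}, and the mechanism you sketch cannot supply it: $\nabla Z=\mathrm{Id}$ holds for \emph{every} K\"ahler potential, and nothing about the non-degeneracy of $\omega$ degenerates pointwise when $Z$ is tangent to a lower-rank disc --- a factor disc of a bidisc is a perfectly good totally geodesic leaf candidate, and no balancing of ``strongly orthogonal components'' follows from linear algebra. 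The exclusion of rank $k<r$ is genuinely analytic and uses both hypotheses simultaneously: $\abs{\partial\varphi}_\omega^2\leq\cC$ on a full neighborhood \emph{with equality along the leaf} $S$. The paper first extends the local leaf to a complete totally geodesic disc of standard form (Proposition~\ref{prop:totally geodesic disc}; your assertion that completeness of $\Omega$ makes the local leaf complete is itself unjustified without this), then embeds $S$ in a bidisc $D$ spanned by the engaged and complementary diagonals, splits $\varphi_D(\zeta,\xi)=\psi(\zeta)+\eta(\xi)+f+\bar f$ with $f$ holomorphic, notes that $\xi\mapsto\abs{\pdl{\varphi_D}{\zeta}}^2$ is subharmonic, and uses the interior maximum of $\abs{\vV^\top}_{\omega_D}^2$ along $\xi=0$ to force $\pdl{\varphi_D}{\xi}\equiv0$, contradicting $\partial_\xi\partial_{\bar\xi}\varphi_D=\mu>0$. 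Without this argument (or a substitute), your part~(1) only identifies $c$ with $2/\cK_u$ for \emph{some} leaf curvature $\cK_u\in\set{\cK/k}$, i.e., the rigidity is not established.

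Your part~(2) uses the same reduction as the paper (restrict to the full-rank diagonal disc, where $\omega$ restricts to the Poincar\'e metric of curvature $-2\cK/\cL_\Omega^2$), but proves the one-dimensional lemma by a genuinely different, more elementary route than the paper, which obtains it as the $n=1$ case of Proposition~\ref{prop:ball case} via the Bochner identity $\triangle_\omega\abs{\partial\varphi}_\omega^2=\varphi_{\alpha;\beta}\varphi^{\alpha;\beta}+n-\cK\abs{\partial\varphi}_\omega^2$, a maximum-point eigenvalue argument, and potential rescaling by automorphisms when the supremum is unattained. Your half-plane formula is correct: with $\psi_0=-\tfrac{2}{\mu}\log\IM t$ one gets $\abs{\partial\psi}^2_\omega=\tfrac{2}{\mu}\abs{i+\tfrac{\mu}{2}\IM t\,H'(t)}^2$. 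However, your disposal of the ``complementary case'' is not a proof: $H'$ being unbounded near every boundary arc does not imply that the harmonic function $\RE H$ tends to $-\infty$ along the boundary, and the Busemann-rate comparison is unsubstantiated. Beware also that the supremum can equal $2/\mu$ without being attained (take $H=\tfrac{2}{\mu}\log t$, giving $\abs{\partial\psi}^2=\tfrac{2}{\mu}\,x^2/\abs{t}^2<\tfrac{2}{\mu}$ everywhere), so any argument pivoting on a maximum point fails --- this is exactly why the paper rescales. The gap is repairable within your framework: if $\sup_t\abs{i+\tfrac{\mu}{2}\IM t\,H'}\leq\rho<1$, then $H'$ is zero-free and $k:=2/(\mu H')$ maps $\HP$ into $\HP$ with $\abs{k(t)}\leq \IM t/(1-\rho)$, so $k$ extends continuously by $0$ to the real axis and vanishes identically by Schwarz reflection, a contradiction. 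With that repair your part~(2) is a valid alternative to the paper's; part~(1) remains missing its essential lemma.
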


The organization of this paper is as follows.
In \S\ref{sec:gradient}, we study the integral submanifold of the distribution generated by the gradient vector field of the K\"ahler potential. 
Moreover, we will reduce the proof of main theorem to this integral submanifold (Theorem~\ref{thm:integral curve of gradient vector field}).
In \S\ref{sec:polydisc}, we present the geometry of polydisc $\Delta^n$.
In particular, we determine the constant gradient length of a local potential of the Poincar\`e metric on $\Delta^n$ (Proposition~\ref{prop:polydisc uniqueness}). To achieve this, we prove Lemma~\ref{lem:rank} which also plays a crucial role to establish the main theorems.
Remark~\ref{rmk:reducible} discusses the analogous results presented in \S\ref{sec:polydisc} for reducible bounded symmetric domains case.
In \S\ref{sec:proof}, we describe the Gaussian curvature of the totally geodesic holomorphic disc in a maximal polydisc of the bounded symmetric domain in terms of its rank and genus. By using this description, we prove the main theorems in \S\ref{subsec:proof1} and \S\ref{subsec:proof2}.


\section{The gradient vector field of a K\"ahler potential}\label{sec:gradient}




In this section, we introduce the basic notions of the K\"ahler metric and examine the properties of the gradient vector field associated with a K\"ahler potential. In particular, we will show that if a K\"ahler manifold admits a potential with a gradient of constant length, this length is determined by the Gaussian curvature of a totally geodesic holomorphic curve.

\begin{theorem}\label{thm:integral curve of gradient vector field}
If a K\"ahler manifold $(X,\omega)$ admits a local potential $\varphi$ of $\omega$ satisfying
\begin{equation*}
\abs{\partial\varphi}_\omega^2\equiv \cC \;
\end{equation*}
for some constant $\cC$,
then the distribution generated by the gradient vector field $\vV$ of $\varphi$ is involutive. Moreover, each integral submanifold of this distribution is a totally geodesic holomorphic curve with constant Gaussian curvature:
\begin{equation*}
\kappa\equiv-\frac{2}{\cC}\;.
\end{equation*}
\end{theorem}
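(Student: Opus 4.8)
The plan is to pass to local holomorphic coordinates $z^1,\dots,z^n$ in which $\omega=i\partial\bar\partial\varphi$, so that $g_{\alpha\bar\beta}=\varphi_{\alpha\bar\beta}$ and the $(1,0)$-type gradient field is $\vV=\vV^\alpha\partial_\alpha$ with $\vV^\alpha=g^{\alpha\bar\beta}\varphi_{\bar\beta}$. In this notation the hypothesis reads $\cC=g^{\alpha\bar\beta}\varphi_\alpha\varphi_{\bar\beta}=\varphi_\alpha\vV^\alpha$, and since $\varphi$ is a genuine K\"ahler potential we have $\cC>0$, so $\vV$ is nowhere zero and the $J$-invariant real rank-two distribution $D=\CC\vV$ is well defined. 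The engine of the whole argument is the identity
\[
\nabla_\gamma\vV^\alpha=\delta^\alpha_\gamma,
\]
which merely expresses that $\varphi$ is a potential: the mixed Christoffel symbols of a K\"ahler metric vanish, so $\nabla_\gamma\varphi_{\bar\beta}=\partial_\gamma\partial_{\bar\beta}\varphi=g_{\gamma\bar\beta}$, whence $\nabla_\gamma\vV^\alpha=g^{\alpha\bar\beta}g_{\gamma\bar\beta}=\delta^\alpha_\gamma$.

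Next I would differentiate the constancy relation $\cC=\varphi_\alpha\vV^\alpha$. Using $\nabla_\gamma\vV^\alpha=\delta^\alpha_\gamma$ this yields the Hessian identity $\vV^\alpha\varphi_{\alpha\gamma}=-\varphi_\gamma$ (with $\varphi_{\alpha\gamma}=\nabla_\gamma\varphi_\alpha$) together with its conjugate. These identities evaluate every covariant derivative of the frame $\{\vV,\bar\vV\}$ along itself:
\[
\nabla_\vV\vV=\vV,\qquad \nabla_{\bar\vV}\bar\vV=\bar\vV,\qquad \nabla_\vV\bar\vV=-\bar\vV,\qquad \nabla_{\bar\vV}\vV=-\vV.
\]
In particular $[\vV,\bar\vV]=\nabla_\vV\bar\vV-\nabla_{\bar\vV}\vV=\vV-\bar\vV$ is a section of $D$, so $D$ is involutive and, by the Frobenius theorem, integrates to submanifolds; being $J$-invariant these are holomorphic curves. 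The same four formulas show that $\nabla_Y Z$ is a section of $D$ whenever $Y,Z\in\{\vV,\bar\vV\}$, i.e. the second fundamental form vanishes, so each integral curve is totally geodesic.

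It remains to compute the Gaussian curvature. Feeding the four covariant derivatives into the curvature operator gives
\[
R(\vV,\bar\vV)\vV=\nabla_\vV\nabla_{\bar\vV}\vV-\nabla_{\bar\vV}\nabla_\vV\vV-\nabla_{[\vV,\bar\vV]}\vV=-2\vV.
\]
Writing the real orthogonal tangent frame $X=\vV+\bar\vV$, $JX=i(\vV-\bar\vV)$ of the curve, one gets $R(X,JX)JX=-4X$, while the convention $\omega=i\partial\bar\partial\varphi$ gives $g(X,X)=2g(\vV,\bar\vV)=2\cC$. Hence the Gaussian curvature, which for a totally geodesic holomorphic curve equals the ambient sectional curvature of $D$, is
\[
\kappa=\frac{g(R(X,JX)JX,X)}{|X|^4}=\frac{-4\,(2\cC)}{(2\cC)^2}=-\frac{2}{\cC}.
\]

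The one genuinely delicate point is the normalization bookkeeping in this last step: the raw curvature identity $R(\vV,\bar\vV)\vV=-2\vV$ must be combined with the correct factor in $g(X,X)=2\cC$ to land on $-2/\cC$ rather than $-4/\cC$. I would pin this down, and cross-check the constant, by restricting $\varphi$ to an integral curve $S$: since $\vV$ is tangent to $S$, the restriction $\varphi|_S$ is again a potential of $\omega|_S$ with the \emph{same} constant gradient length $\cC$, and the statement reduces to the one-variable identity $\kappa=-\lambda^{-1}\partial_w\partial_{\bar w}\log\lambda=-2/\cC$ with $\lambda=\varphi_{w\bar w}$ subject to the constraint $\varphi_w\varphi_{\bar w}=\cC\lambda$.
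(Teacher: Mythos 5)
Your proposal is correct and follows essentially the same route as the paper's proof: the potential identity $\nabla_v\vV=v$, the consequence $\nabla_{\overline\vV}\vV=-\vV$ of differentiating $\abs{\partial\varphi}_\omega^2\equiv\cC$, the bracket $\lb{\vV,\overline\vV}=\vV-\overline\vV$ giving involutivity and total geodesy, and the curvature identity $R(\vV,\overline\vV)\vV=-2\vV$. The only cosmetic difference is the final normalization, where the paper evaluates $\kappa=\ip{R(\vV,\overline\vV)\vV,\overline\vV}_\omega/\ip{\vV,\overline\vV}_\omega^2$ directly while you pass to the real frame $X=\vV+\overline\vV$, $JX=\im\paren{\vV-\overline\vV}$; both bookkeepings correctly yield $\kappa\equiv-2/\cC$.
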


As we will define in \eqref{eqn:def of gradient}, the gradient vector field of $\varphi$ in this paper is the $(1,0)$-component of the Riemannian gradient vector field, so it is a section to the $(1,0)$-tangent bundle $T^{(1,0)}X$. Thus the distribution generated by $\vV$ means the rank $2$ distribution of the real tangent bundle $TX$ generated by $\vV+\overline\vV$ and $\im\paren{\vV-\overline\vV}$.

\medskip

In the Riemann surface case with a hermitian metric, the existence of a potential with constant gradient length is equivalent to the Gaussian curvature being a negative constant.

\begin{corollary}
Let $S$ be a Riemann surface with a hermitian metric $ds^2$. If there is a potential function $\varphi$ with $\abs{\partial\varphi}_{ds^2}^2\equiv \cC$, then the Gaussian curvature of $ds^2$ is constant: $\kappa\equiv -2/\cC$.
\end{corollary}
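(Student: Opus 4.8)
The plan is to obtain the corollary as an immediate specialization of Theorem~\ref{thm:integral curve of gradient vector field} to complex dimension one, the key observation being that on a Riemann surface the integral submanifold of the gradient distribution is (an open subset of) the whole surface.

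First I would record that any hermitian metric $ds^2$ on a Riemann surface $S$ is automatically K\"ahler: its fundamental form $\omega$ is a $2$-form on a real $2$-dimensional manifold, so $d\omega=0$ for purely dimensional reasons. Hence $(S,\omega)$ is a $1$-dimensional K\"ahler manifold, and the hypothesis $\abs{\partial\varphi}_{ds^2}^2\equiv\cC$ is exactly the hypothesis of the theorem with $X=S$. Next I would note that since $\varphi$ is a local potential of the metric, $\partial\varphi$ cannot vanish on any open set (otherwise $\omega=dd^c\varphi$ would degenerate there), so the constant $\cC$ is strictly positive and the gradient vector field $\vV$ of $\varphi$ is nowhere zero.

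Because $\dim_\CC S=1$, at every point the rank $2$ real distribution generated by $\vV+\overline\vV$ and $\im\paren{\vV-\overline\vV}$ fills up the entire real tangent bundle $TS$. Consequently the unique integral submanifold of this distribution through any point is an open neighborhood in $S$ carrying the induced metric, which is $ds^2$ itself; the totally geodesic condition from the theorem is then automatic, being that of an open subset. Applying the curvature conclusion of Theorem~\ref{thm:integral curve of gradient vector field} to this integral submanifold shows that the Gaussian curvature of $ds^2$ equals $-2/\cC$ near every point, and connectedness of $S$ upgrades this to $\kappa\equiv-2/\cC$ on all of $S$.

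There is essentially no genuine obstacle here, as the statement is a direct one-dimensional reduction of the theorem; the only point demanding a moment of care is the verification that $\cC>0$, which is what guarantees that the gradient distribution is truly the full tangent bundle and that the integral submanifold is open in $S$ rather than concentrated on the zero locus of $\vV$. Everything else is a transcription of the higher-dimensional statement into the setting where the ambient manifold and the totally geodesic holomorphic curve coincide.
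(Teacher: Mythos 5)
Your proposal is correct and matches the paper's intent exactly: the paper states this corollary as an immediate specialization of Theorem~\ref{thm:integral curve of gradient vector field} to complex dimension one, where the hermitian metric is automatically K\"ahler and the integral submanifold of the gradient distribution is an open subset of $S$ itself. Your added verifications (closedness of $\omega$ for dimensional reasons, $\cC>0$ so that $\vV$ is nowhere zero and the distribution spans $TS$) are precisely the routine details the paper leaves implicit.
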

Therefore we have the $1$-dimensional version of Theorem~\ref{thm:main1} and Statement (\ref{thm:main KE1}) of Theorem~\ref{thm:main KE}.

\begin{corollary}
Let $\omega_\Delta$ be the Poincar\'e metric of the unit disc $\Delta=\set{\zeta\in\CC: \abs{\zeta}<1}$ with Gaussian curvature  $-\cK$. If there is a local potential function $\varphi$ of $\omega_\Delta$ with constant gradient length, then 
\begin{equation*}
\abs{\partial\varphi}_{\omega_\Delta}^2\equiv\frac{2}{\cK} \;.
\end{equation*}
\end{corollary}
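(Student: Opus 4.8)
The plan is to recognize this corollary as the direct specialization of the preceding Riemann-surface corollary to the particular case $S=\Delta$ equipped with the Poincar\'e metric $ds^2=\omega_\Delta$. Indeed, the unit disc $\Delta$ is a Riemann surface and its Poincar\'e metric $\omega_\Delta$ is a hermitian metric on it, so the hypotheses of the preceding corollary are met as soon as a local potential with constant gradient length is assumed to exist. In the one-dimensional setting the gradient distribution of Theorem~\ref{thm:integral curve of gradient vector field} is automatically the entire (real two-dimensional) tangent bundle, so its single integral submanifold is all of $\Delta$; the conclusion about the Gaussian curvature of the totally geodesic holomorphic curve becomes a statement about the curvature of $\omega_\Delta$ itself.

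Concretely, I would argue as follows. Suppose $\varphi$ is a local potential of $\omega_\Delta$ with $\abs{\partial\varphi}_{\omega_\Delta}^2\equiv\cC$ for some constant $\cC$. Applying the preceding corollary with $ds^2=\omega_\Delta$ forces the Gaussian curvature of $\omega_\Delta$ to be the constant $-2/\cC$. On the other hand, the Poincar\'e metric is normalized in the statement to have Gaussian curvature $-\cK$. Since the curvature is a single well-defined scalar at each point, these two values must coincide, giving
\begin{equation*}
-\cK = -\frac{2}{\cC}\;,
\end{equation*}
hence $\cC = 2/\cK$. Substituting this back into $\abs{\partial\varphi}_{\omega_\Delta}^2\equiv\cC$ yields the asserted identity $\abs{\partial\varphi}_{\omega_\Delta}^2\equiv 2/\cK$.

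There is essentially no obstacle here: all of the geometric content has already been extracted in Theorem~\ref{thm:integral curve of gradient vector field} and its Riemann-surface corollary, which convert the constant-gradient-length hypothesis into the curvature relation $\kappa\equiv -2/\cC$. The only additional input is the standard normalization that the Poincar\'e metric realizes the prescribed constant Gaussian curvature $-\cK$; combining this known value with the abstract relation pins down $\cC$ numerically. If anything deserves care, it is simply the bookkeeping of signs and the observation that, in dimension one, the involutive distribution of the main theorem fills out the whole surface so that ``integral submanifold'' and ``ambient disc'' are one and the same.
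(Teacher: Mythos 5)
Your proposal is correct and matches the paper's own (implicit) argument: the paper presents this corollary as an immediate consequence of the preceding Riemann-surface corollary, obtained exactly as you do by equating the curvature value $-2/\cC$ from Theorem~\ref{thm:integral curve of gradient vector field} with the prescribed Gaussian curvature $-\cK$ of the Poincar\'e metric and solving for $\cC=2/\cK$. Your remark that in dimension one the gradient distribution spans the whole tangent space, so no separate integral submanifold analysis is needed, is also the right observation.
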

Note that the rank of the unit disc is $1$ and the genus $c_\Delta$ is $2$. Thus the K\"ahler-hyperbolicity length of $\Delta$ is given by $\cL_\Delta^2=2$.


\subsection{Basic notations of K\"ahler manifolds}
Let $X$ be an $n$-dimensional complex manifold with a K\"ahler metric $\omega$.  In a local holomorphic coordinate system $z=(z^1,\ldots,z^n)$, the K\"ahler form $\omega$ can be written as
\begin{equation*}
\omega = \im g_{\alpha\bar\beta} dz^\alpha\wedge dz^{\bar\beta}
\end{equation*}
where the metric tensor $g_{\alpha\bar\beta}$ stands for the value of $\ip{\pdl{}{z^\alpha},\pdl{}{z^{\bar\beta}}}_\omega$. Throughout this paper, the indices $\alpha,\beta,\ldots$ run from $1$ to $n$ and the summation convention for duplicated indices is always assumed. We denote the complex conjugate of a tensor by taking the bar on the indices: $\overline{z^\alpha} = z^{\bar\alpha}$, $\overline{g_{\alpha\bar\beta}} = g_{\bar\alpha\beta}$. Since $(g_{\alpha\bar\beta})$ is hermitian, we have $g_{\alpha\bar\beta}=g_{\bar\beta\alpha}$.

In this coordinate expression, we have the inverse matrix $(g^{\bar\beta\alpha})$ of $(g_{\alpha\bar\beta})$: $g^{\alpha\bar\gamma}g_{\bar\gamma\beta}=\ind{\delta}{}{\alpha}{\beta}$. We also use  $(g_{\alpha\bar\beta})$ and  $(g^{\bar\beta\alpha})$ to raise and lower indices in the tensor calculus: $
\varphi^\alpha = g^{\alpha\bar\beta}\varphi_{\bar\beta}$, $R_{\alpha\bar\beta\lambda\bar\mu} = g_{\bar\beta\gamma}\ind{R}{\alpha}{\gamma}{\lambda\bar\mu}$. 
Here, $R_{\alpha\bar\beta\lambda\bar\mu}$ and $\ind{R}{\alpha}{\gamma}{\lambda\bar\mu}$ represent the coefficient of the Riemannian curvature tensor
\begin{equation*}
R(\partial_\lambda,\partial_{\bar\mu})\partial_\alpha 
	= \nabla_{\partial_\lambda}\nabla_{\partial_{\bar\mu}}\partial_\alpha
	-\nabla_{\partial_{\bar\mu}}\nabla_{\partial_\lambda}\partial_\alpha
	=\ind{R}{\alpha}{\gamma}{\lambda\bar\mu}\partial_\gamma
\end{equation*}
where $\partial_\alpha=\pdl{}{z^\alpha}$, $\partial_{\bar\beta}=\pdl{}{z^{\bar\beta}}$ and $\nabla$ is the K\"ahler connection of $\omega$. Then the Ricci curvature tensor can be written as $\Ric _\omega = R_{\alpha\bar\beta} dz^\alpha\otimes dz^{\bar\beta}$ for $R_{\alpha\bar\beta}=g^{\lambda\bar\mu} R_{\alpha\bar\beta\lambda\bar\mu}$. The Ricci form $\Ric(\omega)$ of $\omega$,  an intrinsically defined global $(1,1)$-form, is given by
\begin{equation*}
\Ric(\omega)= \im R_{\alpha\bar\beta} dz^\alpha\wedge dz^{\bar\beta}
\end{equation*}
which can be obtained by
\begin{equation*}
\Ric(\omega) = -dd^c \log\det(g_{\alpha\bar\beta}) 
\end{equation*}
where $\dc=\frac{\im}{2}(\bar\partial-\partial)$. 
Note that $d\dc=\im\partial\bar\partial$.

\medskip

For a K\"ahler manifold $(X,\omega)$, a \emph{local potential} of $\omega$ is a local smooth function $\varphi$  satisfying
\begin{equation*}
d\dc\varphi = \omega \;.
\end{equation*}
This is equivalent to $\varphi_{\alpha\bar\beta}=g_{\alpha\bar\beta}$ for any $\alpha,\beta$ under the notations  $\varphi_\alpha = \pdl{\varphi}{z^\alpha}$, $\varphi_{\bar\beta}=\pdl{\varphi}{z^{\bar\beta}}$ and $\varphi_{\alpha\bar\beta} = \partial^2\varphi/\partial z^\alpha\partial z^{\bar\beta}$.

The \emph{gradient length} of a potential $\varphi$ is the length of the $(1,0)$-form $\partial\varphi$ measured by $\omega$ which is locally expressed by
\begin{equation*}
\abs{\partial\varphi}_\omega^2 
	=\ip{\partial\varphi,\bar\partial\varphi}_\omega
	=\ip{\varphi_\alpha dz^\alpha, \varphi_{\bar\beta} dz^{\bar\beta}}_\omega
	= \varphi_\alpha\varphi_{\bar\beta}g^{\alpha\bar\beta} =\varphi_\alpha\varphi^\alpha
	\;.
\end{equation*}
This quantity coincides with the metric length of the \emph{gradient vector field} of $\varphi$,
\begin{equation*}
\vV=\grad(\varphi) 
	= \varphi^\alpha\partial_\alpha,
\end{equation*}
which is uniquely defined by the $(1,0)$-tangent vector field $\vV$ satisfying 
\begin{equation}\label{eqn:def of gradient}
\partial\varphi(v) = \ip{v,\overline\vV}_\omega \quad\text{for any $(1,0)$-tangent vector $v$ of $X$. }
\end{equation}


\subsection{A potential with constant gradient length}
In order to prove Theorem~\ref{thm:integral curve of gradient vector field}, we need the following basic properties of the gradient vector field of a K\"ahler potential.
\begin{proposition}\label{prop:on gradient vector field}
Let $(X,\omega)$ be a K\"ahler manifold and $\varphi$ be a potential of $\omega$. For the gradient vector field $\vV=\grad(\varphi)$, the followings hold.
\begin{enumerate}
\item For any $(1,0)$-tangent vector $v$, 
\begin{equation*}
\nabla_v \vV = v  \;.
\end{equation*}
\item If $\abs{\partial\varphi}_\omega$ is constant, then
\begin{equation*}
\nabla_{\overline\vV}\vV = -\vV  \;.
\end{equation*}
\end{enumerate}
\end{proposition}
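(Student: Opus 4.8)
The plan is to carry out the whole computation in a local holomorphic coordinate system $z=(z^1,\dots,z^n)$, working directly with the K\"ahler connection $\nabla$ of $\omega$ and exploiting its three defining features: it is metric, so $\nabla g=0$ and indices may be raised or lowered freely past $\nabla$; it is torsion-free; and on a K\"ahler manifold it carries no mixed Christoffel symbols, so $\nabla_{\partial_\beta}$ annihilates antiholomorphic indices and $\nabla_{\partial_{\bar\beta}}$ annihilates holomorphic ones. Since $\nabla_v\vV$ is tensorial in $v$, it suffices to verify both identities on coordinate vector fields. The only extra input is the potential relation $\varphi_{\alpha\bar\beta}=g_{\alpha\bar\beta}$.

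For statement (1), I write $\vV=\varphi^\alpha\partial_\alpha$ with $\varphi^\alpha=g^{\alpha\bar\gamma}\varphi_{\bar\gamma}$ and compute $\nabla_{\partial_\beta}\vV^\alpha$. Because $\nabla g=0$ I may pull $g^{\alpha\bar\gamma}$ outside the derivative, and since $\partial_\beta$ acts on the antiholomorphic index $\bar\gamma$ with no Christoffel correction, $\nabla_{\partial_\beta}\varphi_{\bar\gamma}=\partial_\beta\varphi_{\bar\gamma}=\varphi_{\beta\bar\gamma}=g_{\beta\bar\gamma}$. Hence $\nabla_{\partial_\beta}\vV^\alpha=g^{\alpha\bar\gamma}g_{\beta\bar\gamma}=\delta^\alpha_\beta$, i.e. $\nabla_{\partial_\beta}\vV=\partial_\beta$, and the general case follows by tensoriality.

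For statement (2) the strategy is to first produce the companion formula $\nabla_{\partial_{\bar\delta}}\vV^\alpha=g^{\alpha\bar\gamma}\,\nabla_{\partial_{\bar\delta}}\nabla_{\partial_{\bar\gamma}}\varphi$, where the covariant Hessian $\nabla_{\partial_{\bar\delta}}\nabla_{\partial_{\bar\gamma}}\varphi$ is symmetric in $\bar\gamma,\bar\delta$ by torsion-freeness. I then differentiate the constant function $\abs{\partial\varphi}_\omega^2=g^{\alpha\bar\beta}\varphi_\alpha\varphi_{\bar\beta}$ covariantly in an antiholomorphic direction; using $\nabla g=0$ together with the analogous identity $\nabla_{\partial_{\bar\delta}}\varphi_\alpha=g_{\alpha\bar\delta}$, the vanishing of this derivative collapses to $\varphi^{\bar\beta}\,\nabla_{\partial_{\bar\delta}}\nabla_{\partial_{\bar\beta}}\varphi=-\varphi_{\bar\delta}$. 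Contracting $\nabla_{\partial_{\bar\delta}}\vV^\alpha$ against the components $\varphi^{\bar\delta}$ of $\overline\vV$ and invoking this identity (after one use of the Hessian symmetry) yields $(\nabla_{\overline\vV}\vV)^\alpha=g^{\alpha\bar\gamma}(-\varphi_{\bar\gamma})=-\varphi^\alpha$, that is, $\nabla_{\overline\vV}\vV=-\vV$.

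The step I expect to be the real obstacle is precisely this last reconciliation. The naive route---differentiating the constant $\ip{\vV,\vV}_\omega$ along $\overline\vV$ and combining metric compatibility with statement (1)---produces only $\ip{\nabla_{\overline\vV}\vV+\vV,\vV}_\omega=0$; it controls the component of $\nabla_{\overline\vV}\vV$ along $\vV$ but says nothing about the orthogonal part, so it is too weak to give the pointwise vector identity. To upgrade this scalar relation to a full tensor identity one must differentiate $\abs{\partial\varphi}_\omega^2$ in \emph{every} antiholomorphic direction, not merely along $\overline\vV$. Then the symmetry of the covariant Hessian $\nabla_{\partial_{\bar\delta}}\nabla_{\partial_{\bar\gamma}}\varphi$ is what lets the contraction arising from $\nabla_{\overline\vV}\vV$ (on the differentiation index) be matched with the contraction arising from the derivative of the norm (on the form index). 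This symmetry is exactly where the hypothesis that $\varphi$ is a K\"ahler \emph{potential}, so that $g_{\alpha\bar\gamma}=\varphi_{\alpha\bar\gamma}$ is a Hessian, enters, and it is what makes the constant gradient length condition rigid enough to pin down $\nabla_{\overline\vV}\vV$ completely.
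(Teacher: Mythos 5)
Your proof is correct and follows essentially the same route as the paper's: statement (1) from $\varphi_{\bar\gamma;\beta}=\varphi_{\beta\bar\gamma}=g_{\beta\bar\gamma}$, and statement (2) by differentiating the constant $\abs{\partial\varphi}_\omega^2$ in \emph{every} coordinate direction (not merely along the gradient) and using the torsion-free symmetry of the second covariant derivative of $\varphi$ to swap the contraction indices. The only cosmetic difference is that the paper differentiates in holomorphic directions, obtains $\nabla_{\vV}\overline\vV=-\overline\vV$, and concludes by conjugation, whereas you run the conjugate computation in antiholomorphic directions and land on $\nabla_{\overline\vV}\vV=-\vV$ directly.
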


\begin{proof}
Let us denote the covariant derivative of $\vV$ by
\begin{equation*}
\nabla  \vV = \nabla \paren{\varphi^\alpha\partial_\alpha} 
	= \paren{
		\ind{\varphi}{}{\alpha}{;\beta} dz^\beta
		+\ind{\varphi}{}{\alpha}{;\bar\beta} dz^{\bar\beta}
		}\otimes\partial_\alpha\;.
\end{equation*}
using the semicolon notation. Since $\varphi_{\bar\gamma;\beta} =\varphi_{\bar\gamma\beta}=g_{\bar\gamma\beta}$, we have $\ind{\varphi}{}{\alpha}{;\beta} = g^{\alpha;\bar\gamma}\varphi_{\bar\gamma\beta} = g^{\alpha\bar\gamma}g_{\bar\gamma\beta} = \ind{\delta}{}{\alpha}{\beta}$; thus the first statement follows.

Differentiating the constant function $\abs{\partial\varphi}_\omega^2$ with respect to $\partial_{\gamma}$, we get
\begin{equation}\label{eqn:derivative of length}
\begin{aligned}
0
	=\partial_{\gamma}\ip{\vV,\overline\vV}_\omega
	&=\ip{\nabla_{\partial_{\gamma}}\vV, \overline\vV}_\omega
	+\ip{\vV,\nabla_{\partial_{\gamma}}\overline\vV}_\omega
	\\
	&=\ind{\varphi}{}{\alpha}{;\gamma}\varphi_\alpha
	+\varphi^\alpha\varphi_{\alpha;\gamma}
	=\varphi_\gamma
	+\varphi^\alpha\varphi_{\alpha;\gamma}
	=\varphi_\gamma
	+\varphi^\alpha\varphi_{\gamma;\alpha}
	\;,
    \end{aligned}
\end{equation}
since  $\nabla$ is torsion-free so $\varphi_{\alpha;\gamma}=\varphi_{\gamma;\alpha}$. This implies 
\begin{equation*}
-\varphi^{\bar\beta} = -\varphi_{\gamma}g^{\gamma\bar\beta} = \varphi^\alpha\varphi_{\gamma;\alpha}g^{\gamma\bar\beta} = \varphi^\alpha\ind{\varphi}{}{\bar\beta}{;\alpha} \;.
\end{equation*}
Therefore we get
\begin{equation*}
\nabla_{\vV}\overline\vV 
	= \nabla_{\varphi^\alpha\partial_\alpha}\varphi^{\bar\beta}\partial_{\bar\beta}
	= \varphi^\alpha\nabla_{\partial_\alpha}\varphi^{\bar\beta}\partial_{\bar\beta}
	= \varphi^\alpha\ind{\varphi}{}{\bar\beta}{;\alpha}\partial_{\bar\beta}
	= -\varphi^{\bar\beta}\partial_{\bar\beta}
	=-\overline\vV.
\end{equation*}
This completes the proof.
\end{proof}

\bigskip

\noindent \textit{Proof of Theorem~\ref{thm:integral curve of gradient vector field}}.
Under the assumption $\abs{\partial\varphi}_\omega^2\equiv\cC$, the gradient vector field $\vV$ has nonzero constant length $\cC$, so it is nowhere vanishing on its domain and satisfies  
\begin{equation}\label{eqn:covariant derivative of V}
\nabla_{\vV}\vV=\vV\;, \quad \nabla_{\overline\vV}\vV=-\vV
\end{equation}
by Proposition~\ref{prop:on gradient vector field}. Now we have the real distribution generated by $\vV$, i.e. spanned by $\vV+\overline\vV$ and $\im\paren{\vV-\overline\vV}$, which is of constant rank $2$ and  invariant under the complex structure of $X$. This is involutive since  
\begin{equation}\nonumber
\begin{aligned}
\lb{\vV+\overline\vV,\im\paren{\vV-\overline\vV}}
&=-2\im\lb{\vV,\overline\vV} 
\\
&= -2\im\paren{\nabla_\vV\overline\vV-\nabla_{\overline\vV}\vV} = -2\im\paren{\vV-\overline\vV} \;.
\end{aligned}
\end{equation}
Therefore, for each point $p$ at which $\varphi$ is defined, there is a holomorphic curve $u:U\subset\CC\to X$ passing through $p$ such that $\vV$ is tangent to $S=u(U)$. Equations in \eqref{eqn:covariant derivative of V} imply that the K\"ahler connection of $(U,u^*\omega)$ coincides with $u^*\nabla$; thus $u$ is a totally geodesic curve. Since 
\begin{equation}\nonumber
\begin{aligned}
R(\vV,\overline\vV)\vV
	&=\nabla_\vV\nabla_{\overline\vV}\vV
	-\nabla_{\overline\vV}\nabla_\vV\vV
	-\nabla_{\lb{\vV,\overline\vV}}\vV
	\\
	&=\nabla_\vV(-\vV)
	-\nabla_{\overline\vV}\vV
	-\paren{\nabla_\vV\vV-\nabla_{\overline\vV}\vV}
	=-2\vV \;,
    \end{aligned}
    \end{equation}
the Gaussian curvature $\kappa$ of the induced metric $u^*\omega$ is obtained by
\begin{equation*}
\kappa
	= \frac{\ip{R(\vV,\overline\vV)\vV,\overline\vV}_\omega}{\ip{\vV,\overline\vV}_\omega^2}
	= \frac{-2\ip{\vV,\overline\vV}_\omega}{\ip{\vV,\overline\vV}_\omega^2}
	\equiv \frac{-2}{\cC} \;.
\end{equation*}
This completes the proof. \qed


\subsection{A remark on a K\"ahler submanifold and its induced potential}\label{subsec:submanifold}
Let $\iota:Y\hookrightarrow X$ be a complex submanifold and consider the induced metric $\omega_Y=\iota^*\omega$ with the potential $\varphi_Y=\iota^*\varphi$. The restriction of $\vV$ to $Y$ has the orthogonal decomposition with respect to $\iota:Y\hookrightarrow X$
\begin{equation}\label{eqn:orthogonal decomposition}
\vV = \vV^\top+\vV^\bot
\end{equation}
where $\vV^\top$ is the orthogonal projection of $\vV$ to $T^{(1,0)}Y$ and $\vV^\bot$ is the orthogonal complement of $\vV^\top$ in $T^{(1,0)}X$. 
Then, the tangent vector field $\vV^\top$ of $Y$ is indeed the gradient vector field of the potential function $\varphi_Y$ with respect to  $\omega_Y$, i.e. $\partial\varphi_Y(v)=\ip{v,\overline{\vV^\top}}_{\omega_Y}$ for any $v\in T^{(1,0)}Y$.
This observation can be derived using the identity $\partial\varphi(v)=\ip{v,\overline\vV}_\omega$ of \eqref{eqn:def of gradient} and
\begin{equation}\nonumber
\begin{aligned}
\partial\varphi_Y(v) 
	&= v(\varphi_Y)
	=  v(\varphi )
	= \partial\varphi(v)
	\\
	&=\ip{v,\overline\vV}_\omega 
	= \ip{v,\overline{\vV^\top}+\overline{\vV^\bot}}_\omega 
	= \ip{v,\overline{\vV^\top}}_\omega 
	=\ip{v,\overline{\vV^\top}}_{\omega_Y}
\;.
\end{aligned}
\end{equation}
Therefore we have
\begin{equation}\label{eqn:gradient length of submanifold}
\abs{\partial\varphi_Y}_{\omega_Y}=\abs{\vV^\top}_\omega \;.
\end{equation}


\section{The constant gradient length for the polydisc}\label{sec:polydisc}
In this section, we will prove Theorem~\ref{thm:main1} for the polydisc
\begin{equation*}
\Delta^n=\Delta\times\cdots\times\Delta = \set{(z^1,\ldots,z^n)\in\CC^n:\abs{z^\alpha}<1\text{ for $\alpha=1,\ldots,n$}}
\end{equation*}
whose complete \KE metric with Ricci curvature $-\cK$ is given by
\begin{equation}\label{eqn:KE metric of polydisc}
\omega_{\Delta^n}
	= \im\sum_{\alpha=1}^n \frac{2}{\cK\paren{1-\abs{z^\alpha}^2}^2} dz^\alpha\wedge dz^{\bar\alpha}
	\;.
\end{equation}
For any choice of $\theta_\alpha\in\RR$, a potential of the form
\begin{equation*}
\varphi = \frac{1}{\cK}\log\prod_\alpha\frac{\abs{1-e^{\im\theta_\alpha}z^\alpha}^4}{\paren{1-\abs{z^\alpha}^2}^2}
\end{equation*}
has constant gradient length  $2n/\cK$. This value coincides with $\cL^2_{\Delta^n}/\cK$ in Theorem~\ref{thm:main KE} since $\cL_{\Delta^n}^2 = 2n$. In this section we will show that
\begin{proposition}\label{prop:polydisc uniqueness}
If there is a local potential $\varphi$ of $\omega_{\Delta^n}$ in \eqref{eqn:KE metric of polydisc} with constant gradient length, then the length should be
\begin{equation*}
\abs{\partial\varphi}_{\omega_{\Delta^n}}^2\equiv \frac{2n}{\cK} \;.
\end{equation*}
\end{proposition}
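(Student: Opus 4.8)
The plan is to apply Theorem~\ref{thm:integral curve of gradient vector field} and then exploit the product structure of $\omega_{\Delta^n}$ factor by factor. Assuming $\abs{\partial\varphi}_{\omega_{\Delta^n}}^2\equiv\cC$, Theorem~\ref{thm:integral curve of gradient vector field} tells us that the gradient field $\vV=\grad(\varphi)$ generates an involutive rank-two distribution whose integral submanifolds are totally geodesic holomorphic curves, each carrying the constant Gaussian curvature $-2/\cC$. Since $\cL_{\Delta^n}^2=2n$, it suffices to prove that this curvature is exactly $-\cK/n$, equivalently $\cC=2n/\cK$.

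The central quantity is the factorwise length. Writing $\vV=\varphi^\alpha\partial_\alpha$, I set $w_\alpha=g_{\alpha\bar\alpha}\abs{\varphi^\alpha}^2\ge0$ (no summation), so that $\sum_\alpha w_\alpha=\abs{\partial\varphi}_\omega^2=\cC$. First I would show that each $w_\alpha$ is constant along the leaves of the distribution. Using the product Christoffel symbols $\Gamma^\delta_{\beta\gamma}=\delta^\delta_\beta\delta^\delta_\gamma\,\partial_\beta\log g_{\beta\bar\beta}$ together with the identities $\nabla_\vV\vV=\vV$ and $\nabla_{\overline\vV}\vV=-\vV$ from Proposition~\ref{prop:on gradient vector field} (in components, $\vV(\varphi^\alpha)=\varphi^\alpha-\Gamma^\alpha_{\alpha\alpha}(\varphi^\alpha)^2$ and $\overline\vV(\varphi^\alpha)=-\varphi^\alpha$), a direct computation yields $\vV(w_\alpha)=0$; since $w_\alpha$ is real, $\overline\vV(w_\alpha)=\overline{\vV(w_\alpha)}=0$ as well. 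Thus on each integral curve $S$ both $w_\alpha$ and $\cC$ are constants.

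The key step is to pin down these constants by a curvature comparison on a single leaf $S$. Let $\phi_\alpha\colon S\to\Delta$ be the restriction to $S$ of the projection onto the $\alpha$-th factor, a holomorphic map into the disc with its Poincar\'e metric of curvature $-\cK$. The induced metric on $S$ is the sum over $\alpha$ of the pullbacks $\phi_\alpha^*(\text{factor metric})$, and the factor-$\alpha$ pullback assigns to the tangent vector $\vV$ the squared length $w_\alpha$, whereas the full induced metric assigns $\cC$. As any two Hermitian metrics on the Riemann surface $S$ are pointwise proportional, and $w_\alpha,\cC$ are constant on $S$, the factor-$\alpha$ pullback equals $(w_\alpha/\cC)$ times the induced metric. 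When $w_\alpha>0$ this pullback is nondegenerate with Gaussian curvature $-\cK$, being the pullback of a curvature-$(-\cK)$ metric by a holomorphic immersion, while $(w_\alpha/\cC)$ times the induced metric, whose curvature is $-2/\cC$, has Gaussian curvature $-2/w_\alpha$. Equating gives $w_\alpha=2/\cK$; hence $w_\alpha\in\{0,2/\cK\}$ at every point.

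Finally, each $w_\alpha$ is a smooth function valued in the two-point set $\{0,2/\cK\}$, so it is globally constant on the connected domain of $\varphi$: either $w_\alpha\equiv0$ or $w_\alpha\equiv2/\cK$. If $w_\beta\equiv0$ for some $\beta$, then $\varphi^\beta\equiv0$, forcing $\varphi_{\bar\beta}\equiv0$ and therefore $\varphi_{\beta\bar\beta}=0$, contradicting $\varphi_{\beta\bar\beta}=g_{\beta\bar\beta}>0$. Thus $w_\alpha\equiv2/\cK$ for every $\alpha$ and $\cC=\sum_\alpha w_\alpha=2n/\cK$. I expect the main obstacle to be the curvature comparison of the third paragraph: making precise that the factorwise pullback is proportional to the induced metric and that this proportionality is rigid enough to force $w_\alpha$ to the single value $2/\cK$; this is the rank input that compels every factor to be active, and it is presumably the content of the lemma referred to as Lemma~\ref{lem:rank}.
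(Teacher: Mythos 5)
Your proof is correct, but it takes a genuinely different route from the paper's. The paper first extends the local integral curve of $\vV$ to a global totally geodesic disc $u\colon\Delta\to\Delta^n$ (Proposition~\ref{prop:totally geodesic disc}: each component is constant or a disc automorphism), and then invokes Lemma~\ref{lem:rank}, whose proof embeds a bidisc transverse to $S=u(\Delta)$, writes the restricted potential as $\psi(\zeta)+\eta(\xi)+f+\bar f$ with $f$ holomorphic, and kills the transverse derivative by subharmonicity of $\abs{\pdl{\varphi_D}{\zeta}}^2$ in $\xi$ together with the maximum principle. You instead stay entirely local: your computation that the factorwise energies $w_\alpha$ are leafwise constant is right (it uses only $\nabla_\vV\vV=\vV$ and $\nabla_{\overline\vV}\vV=-\vV$ from Proposition~\ref{prop:on gradient vector field}, both valid since $\abs{\partial\varphi}_\omega$ is constant on an open set); the curvature comparison is sound, because where $w_\alpha>0$ the projection $\phi_\alpha$ is an immersion of the leaf, so $\phi_\alpha^*$ of the curvature $-\cK$ factor metric again has curvature $-\cK$, while pointwise proportionality of Hermitian metrics on a curve with leafwise-constant ratio $w_\alpha/\cC$ gives curvature $-2/w_\alpha$; and your exclusion of a dead factor via $\varphi_{\beta\bar\beta}=g_{\beta\bar\beta}>0$ is an elementary replacement for the paper's maximum-principle step. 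What your route buys is a shorter, self-contained proof that dispenses with both Proposition~\ref{prop:totally geodesic disc} and Lemma~\ref{lem:rank}. What it loses is exactly the robustness the paper needs later: your dichotomy $w_\alpha\in\{0,2/\cK\}$ at \emph{every} point, and hence the global constancy argument, requires $\abs{\partial\varphi}_\omega^2\equiv\cC$ on an open subset of $\Delta^n$, whereas Lemma~\ref{lem:rank} assumes only $\abs{\partial\varphi}_\omega^2\le\cC$ with equality on $S$ --- under those weaker hypotheses your leafwise comparison on $S$ alone yields $w_\alpha=\cC/k$ for the $k$ active factors, which sums to $\cC$ for any $k$ and gives no contradiction, so the transverse information is indispensable there. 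Since the weaker form is precisely what is used in \S\ref{subsec:proof1}, where the potential restricted to a maximal polydisc is not of constant gradient length, your argument proves Proposition~\ref{prop:polydisc uniqueness} cleanly but could not substitute for Lemma~\ref{lem:rank} in the proof of Theorem~\ref{thm:main KE}.
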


Let us consider the unit ball $\UB^n=\set{z\in\CC^n:\norm{z}<1}$, the irreducible bounded symmetric domain of rank $1$. If $\omega$ is the complete \KE metric with Ricci curvature $-\cK$, then every totally geodesic holomorphic disc (or curve) has constant Gaussian curvature $\kappa=-2\cK/(n+1)$. Thus a constant gradient length of a potential function should be $(n+1)/\cK$ by Theorem~\ref{thm:integral curve of gradient vector field}. But in a bounded symmetric domain with rank $\geq 2$, there are totally geodesic holomorphic discs with different Gaussian curvature. We will show that the gradient vector field $\vV$ in Theorem~\ref{thm:integral curve of gradient vector field} should be tangent to a totally geodesic disc with full rank using Lemma~\ref{lem:rank}. This lemma is also mainly used to show the uniqueness of Theorem~\ref{thm:main KE}.


\subsection{A totally geodesic holomorphic disc in $\Delta^n$}\label{subsec:totall geodesic disc}
As shown in Theorem~\ref{thm:integral curve of gradient vector field}, a potential with constant gradient length gives a local foliation of totally geodesic holomorphic curves whose induced metric has constant Gaussian curvature. A proper totally geodesic holomorphic disc $u:\Delta\to\Delta^n$  is of the form
\begin{equation*}
u(\zeta)=(u^1(\zeta),\ldots,u^n(\zeta))
\end{equation*}
where $u^\alpha:\Delta\to\Delta$ is constant or biholomorphic. We will show that even a local totally geodesic holomorphic curve extends to the same form.

\begin{proposition}\label{prop:totally geodesic disc}
Let $U$ be an open neighborhood of $0$ in $\Delta$ and let $u$ be a holomorphic curve
\begin{equation*}
u=(u^1,\ldots,u^n): U\to\Delta^n \;.
\end{equation*}
Suppose that $u$ is a totally geodesic isometric embedding with respect to the Poincar\'e metric 
\begin{equation*}
\omega_{\Delta}
	=\im  \frac{2}{\cK_u\paren{1-\abs{\zeta}^2}^2} d\zeta\wedge d\bar\zeta
\end{equation*}
with Gaussian curvature $-\cK_u$ and the \KE metric $\omega_{\Delta^n}$ as in \eqref{eqn:KE metric of polydisc}. Then 
\begin{enumerate}
\item\label{enum: totally geodesic disc1} if $(\pdl{u^\alpha}{\zeta})(0)=0$, then $u^\alpha$ is constant;
\item\label{enum: totally geodesic disc2} if $(\pdl{u^\alpha}{\zeta})(0)\neq0$, then  $u^\alpha$ extends to an automorphism $u^\alpha:\Delta\to\Delta$;
\item\label{enum: totally geodesic disc3}  for  the cardinal  number $k$  of the set $\set{\alpha=1,\ldots,n: (\pdl{u^\alpha}{\zeta})(0)\neq0}$, 
\begin{equation*}
\cK_u= \frac{\cK}{k} \;.
\end{equation*}
\end{enumerate}
\end{proposition}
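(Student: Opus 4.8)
The plan is to decouple the total geodesy of $u$ into a totally geodesic condition on each coordinate map $u^\alpha\colon U\to\Delta$, to classify those one-variable maps, and then to recombine them through the isometry identity. First I would record the isometry hypothesis $u^*\omega_{\Delta^n}=\omega_\Delta$ as
\begin{equation*}
\sum_{\alpha=1}^n \frac{\abs{(u^\alpha)'}^2}{\cK\paren{1-\abs{u^\alpha}^2}^2} = \frac{1}{\cK_u\paren{1-\abs{\zeta}^2}^2},
\end{equation*}
writing $(u^\alpha)'=\pdl{u^\alpha}{\zeta}$. The crucial structural fact is that the Christoffel symbols of the product metric $\omega_{\Delta^n}$ are diagonal, $\Gamma^\alpha_{\alpha\alpha}=2\overline{z^\alpha}/(1-\abs{z^\alpha}^2)$ with all mixed symbols vanishing. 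Hence, for the tangent field $T=\sum_\alpha (u^\alpha)'\partial_\alpha$, the ambient derivative $\nabla_{\partial_\zeta}T$ has $\alpha$-component $(u^\alpha)''+\tfrac{2\overline{u^\alpha}}{1-\abs{u^\alpha}^2}\paren{(u^\alpha)'}^2$. Total geodesy means $\nabla_{\partial_\zeta}T$ is a multiple of $T$, the multiplier being the Christoffel symbol $2\bar\zeta/(1-\abs{\zeta}^2)$ of the induced metric, which by the isometry hypothesis is exactly $\omega_\Delta$. Matching the $\alpha$-components shows that each $u^\alpha$ is a totally geodesic map from $(U,\omega_\Delta)$ into the $\alpha$-th factor $(\Delta,\omega_\alpha)$, i.e. $\nabla\,du^\alpha=0$.

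Next I would classify these maps. Since holomorphic maps between K\"ahler manifolds are harmonic, the $(1,1)$-part of $\nabla\,du^\alpha$ already vanishes, so $\nabla\,du^\alpha=0$ is equivalent to the vanishing of its $(2,0)$-part computed above; consequently $du^\alpha$ is parallel and $\abs{du^\alpha}^2$ is a constant $\mu^2$. If $\mu^2=0$ then $u^\alpha$ is constant; in particular a nonconstant $u^\alpha$ has $\abs{du^\alpha}^2$ equal to a positive constant, which forces $(u^\alpha)'$ to be nowhere zero, whence $(u^\alpha)'(0)\neq0$. This proves statement (\ref{enum: totally geodesic disc1}) by contraposition. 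For a nonconstant $u^\alpha$, being a local biholomorphism, $(u^\alpha)^*\omega_\alpha$ has the same Gaussian curvature $-\cK$ as $\omega_\alpha$; but $(u^\alpha)^*\omega_\alpha=\mu^2\omega_\Delta$ has curvature $-\cK_u/\mu^2$. Equating gives $\mu^2=\cK_u/\cK$, and substituting back turns the pullback identity into
\begin{equation*}
\frac{\abs{(u^\alpha)'}^2}{\paren{1-\abs{u^\alpha}^2}^2}=\frac{1}{\paren{1-\abs{\zeta}^2}^2},
\end{equation*}
so that $u^\alpha$ is a local isometry for the Poincar\'e metric of curvature $-\cK$.

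To obtain statement (\ref{enum: totally geodesic disc2}) I would upgrade this local isometry to a global automorphism. Choosing $\Phi\in\Aut(\Delta)$ with the same $1$-jet as $u^\alpha$ at a base point $\zeta_0$ (possible since $\Aut(\Delta)$ acts simply transitively on the unit tangent bundle of the Poincar\'e metric, and $u^\alpha$ carries a unit tangent at $\zeta_0$ to a unit tangent at $u^\alpha(\zeta_0)$), the composite $\Phi^{-1}\circ u^\alpha$ fixes $\zeta_0$ with derivative $1$ and is again a local isometry, hence is the identity near $\zeta_0$; the identity theorem then gives $u^\alpha=\Phi|_U$. Finally, for statement (\ref{enum: totally geodesic disc3}), the $k$ nonconstant components each satisfy $(u^\alpha)^*\omega_\alpha=\mu^2\omega_\Delta$ with $\mu^2=\cK_u/\cK$, while the remaining components contribute nothing; summing and comparing with the isometry identity $\sum_\alpha (u^\alpha)^*\omega_\alpha=\omega_\Delta$ yields $k\cdot(\cK_u/\cK)=1$, that is, $\cK_u=\cK/k$.

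I expect the two steps requiring the most care to be the decoupling in the first paragraph---where one must use the isometry hypothesis to identify the induced connection on $U$ with that of $\omega_\Delta$, so that the single ambient total-geodesy equation splits into $n$ independent component equations---and the passage from a local hyperbolic isometry to a genuine automorphism of $\Delta$. Both rest on standard facts (diagonality of the product Christoffel symbols, curvature-invariance of holomorphic pullback, and simple transitivity of $\Aut(\Delta)$ on unit frames), which I would state explicitly rather than re-derive.
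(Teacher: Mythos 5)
Your proof is correct, and its skeleton coincides with the paper's: both exploit the diagonality of the Christoffel symbols of $\omega_{\Delta^n}$ together with the isometry hypothesis (which identifies the induced connection on $U$ with the Levi--Civita connection of $\omega_\Delta$) to decouple total geodesy into the per-component equation
\begin{equation*}
u^\alpha_{\zeta\zeta}+\frac{2\overline{u^\alpha}}{1-\abs{u^\alpha}^2}\paren{u^\alpha_\zeta}^2=\frac{2\bar\zeta}{1-\abs{\zeta}^2}\,u^\alpha_\zeta\;,
\end{equation*}
and both finish statement (3) by summing the component pullbacks against $u^*\omega_{\Delta^n}=\omega_\Delta$. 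Where you genuinely diverge is the middle step. The paper differentiates the component equation in $\bar\zeta$, using holomorphy of $u^\alpha_\zeta$ and $u^\alpha_{\zeta\zeta}$, to obtain the pointwise dichotomy $u^\alpha_\zeta=0$ or $\abs{u^\alpha_\zeta}^2=\paren{\frac{1-\abs{u^\alpha}^2}{1-\abs{\zeta}^2}}^2$; statement (1) then requires an identity-theorem argument plus a limit at $0$ to rule out an isolated zero, and the Schwarz--Pick identity comes out directly, with no reference to $\cK_u$. You instead read the component equation as $\nabla\,du^\alpha=0$ (the $(1,1)$-part vanishing automatically by holomorphy), so $\abs{du^\alpha}^2$ is a constant $\mu^2$ by metric compatibility; this yields (1) immediately, since a single zero of $u^\alpha_\zeta$ forces $\mu=0$, and you then evaluate $\mu^2=\cK_u/\cK$ by comparing the curvature $-\cK_u/\mu^2$ of $(u^\alpha)^*\omega_\alpha=\mu^2\omega_\Delta$ with the curvature $-\cK$ inherited under the local biholomorphism $u^\alpha$. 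Your route is more intrinsic and shortens (1), at the cost of invoking two extra standard facts (parallel sections have constant norm; Gaussian curvature is preserved under holomorphic pullback by a local biholomorphism), whereas the paper's $\bar\partial$-trick is more computational but self-contained. A further point in your favor: you supply in full the step the paper only waves at with ``arguments in the Schwarz--Pick Lemma''---since $u^\alpha$ is defined only on $U$, the Schwarz--Pick lemma itself does not apply, and your completion via simple transitivity of $\Aut(\Delta)$ on unit tangent vectors together with the rigidity of local isometries (agreement of $1$-jets implies local agreement, then the identity theorem) is exactly what is needed there.
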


\begin{proof}
Let $D$ and $\nabla$  be the K\"ahler connections of $\omega_\Delta$ and $\omega_{\Delta^n}$.
Since $u:\Delta\rightarrow\Delta^n$ is a totally geodesic embedding, the pull-back connection $u^*\nabla$ coincides with $D$. 
More precisely, we have
\begin{equation*}
	\nabla_{u_*V} u_*W = u_*(D_VW)
\end{equation*}
for any tangent vector fields $V,W$ of $\Delta$. Two K\"ahler connections $D$ and $\nabla$ can be computed as follows.
\begin{align*}
D\pd{}{\zeta} 
	= \frac{2\bar\zeta}{1-\abs{\zeta}^2}d\zeta\otimes\pd{}{\zeta}
	\;,
	\quad
	\nabla\pd{}{z^\alpha}
	= 
	\frac{2 z^{\bar\alpha}}{1-\abs{z^\alpha}^2}dz^\alpha\otimes\pd{}{z^\alpha}.
\end{align*}
Hence it follows that
\begin{equation*}
u_*\paren{D_{\pd{}{\zeta}}{\pd{}{\zeta}}}
	=
	\frac{2\bar u}{1-\abs{\zeta}^2}u_*\paren{\pd{}{\zeta}}
	=
	\frac{2\bar\zeta}{1-\abs{\zeta}^2}u^\alpha_\zeta\pd{}{z^\alpha},
\end{equation*}
and
\begin{align*}
	\nabla_{u_*\paren{\pd{}{\zeta}}}u_*\paren{\pd{}{\zeta}}
	&=
	\nabla_{u_*\paren{\pd{}{\zeta}}}u^\alpha_\zeta\pd{}{z^\alpha}
	=
	\paren{u_*\paren{\pd{}{\zeta}}u^\alpha_\zeta}\pd{}{z^\alpha}
	+
	u^\alpha_\zeta u^\gamma_\zeta\nabla_{\pd{}{z^\gamma}}\pd{}{z^\alpha}
	\\
	&=
	\paren{
		u^\alpha_{\zeta\zeta}
		+
		\paren{u^\alpha_\zeta}^2
		\frac{2u^{\bar\alpha}}{1-\abs{u^\alpha}^2}
	}
	\pd{}{z^\alpha}
\end{align*}
where $u^\alpha_\zeta=\pdl{u^\alpha}{\zeta}$ and $u^\alpha_{\zeta\zeta}=\pdl{u^\alpha_\zeta}{\zeta}=\paren{u_*(\pdl{}{\zeta})}u^\alpha_\zeta$. Therefore, we have
\begin{equation*}
	u^\alpha_{\zeta\zeta}
	+(u^\alpha_\zeta)^2
		\frac{2 u^{\bar\alpha}}{1-\abs{u^\alpha}^2}
	=u^\alpha_\zeta
		\frac{2\bar\zeta}{1-\abs{\zeta}^2}
\end{equation*}
for each $\alpha$. Since $u^\alpha_\zeta$ and $u^\alpha_{\zeta\zeta}$ are holomorphic, the differentiation with respect to $\pdl{}{\bar\zeta}$ gives
\begin{equation}\label{eqn:for totally geodesic curve}
0
	=u^\alpha_\zeta\paren{\frac{2}{\paren{1-\abs{\zeta}^2}^2}
	-\abs{u^\alpha_\zeta}^2\frac{2}{\paren{1-\abs{u^\alpha}^2}^2}}
	\;.
\end{equation}

Suppose that $u^\alpha_\zeta(0)=0$ for some $\alpha$.  If  $u^\alpha_\zeta$ is not identically vanishing, we may assume that $u^\alpha_\zeta$ is nowhere vanishing on $U\setminus\set{0}$ (after by shrinking $U$, if necessary) by the identity theorem of holomorphic functions. The identity~\eqref{eqn:for totally geodesic curve} implies that
\begin{equation}\label{eqn:Schwarz-Pick identity}
\abs{u^\alpha_\zeta}^2
	=\paren{\frac{1-\abs{u^\alpha}^2}{1-\abs{\zeta}^2}}^2
\end{equation}
holds on $U\setminus\set{0}$. As $\zeta$ tends to $0$, it follows that $|u^\alpha_\zeta(0)|=1-\abs{u^\alpha(0)}^2$. Since $u^\alpha$ is a mapping from $U$ to $\Delta$, it follows that $1-\abs{u^\alpha}^2>0$, so $u^\alpha_\zeta(0)\neq 0$. This is a contradiction. Thus $u^\alpha_\zeta(0)=0$  implies that $u^\alpha_\zeta\equiv0$, thus Statement (\ref{enum: totally geodesic disc1}) follows.

\medskip

Suppose that $u^\alpha_\zeta(0)\neq 0$ for some $\alpha$, then we may assume that $u^\alpha_\zeta$ is nowhere vanishing on $U$ by shrinking $U$ if necessary. Then \eqref{eqn:for totally geodesic curve} implies that \eqref{eqn:Schwarz-Pick identity} holds on $U$. Therefore the function $u^\alpha:U\subset\Delta\to\Delta$ preserves the Poincar\'e metric: $(u^\alpha)^*\omega_\Delta=\omega_\Delta$. Using arguments in the Schwarz-Pick Lemma, one can easily obtain Statement (\ref{enum: totally geodesic disc2}).

\medskip

Now we have
\begin{equation}\label{eqn:pulling-back of metric}
    \begin{aligned}
u^*\omega_{\Delta^n}
	&=\im \sum_{\alpha=1}^n \frac{2\abs{u^\alpha_\zeta}^2}{\cK\paren{1-\abs{u^\alpha}^2}^2} d\zeta\wedge d\bar\zeta
	\\
	&= \im \sum_{\afrac{\alpha=1,\ldots,n}{u^\alpha_\zeta(0)\neq 0}} \frac{2}{\cK\paren{1-\abs{\zeta}^2}^2} d\zeta\wedge d\bar\zeta
	= \im \frac{2k}{\cK\paren{1-\abs{\zeta}^2}^2} d\zeta\wedge d\bar\zeta
    \end{aligned}
\end{equation}
by \eqref{eqn:Schwarz-Pick identity}. Since $u$ is an isometric embedding, this is the same as $\omega_\Delta$; thus we get $\cK_u=\cK/k$.
\end{proof}


\subsection{Proof of Proposition~\ref{prop:polydisc uniqueness}} 
Let $u:\Delta\to\Delta^n$ be a totally geodesic isometrically embedded holomorphic disc. The rank of $u$ is defined to be the uniquely determined cardinal number $k=1,\ldots,n$  in Statement~\eqref{enum: totally geodesic disc3} of Proposition~\ref{prop:totally geodesic disc}.

First, we introduce a key lemma to show Proposition~\ref{prop:polydisc uniqueness} and also Theorem~\ref{thm:main KE}.

\begin{lemma}\label{lem:rank}
Let $u:\Delta\to\Delta^n$ be a totally geodesic isometrically embedded holomorphic disc and let $S=u(\Delta)$. Suppose that there is a local potential $\varphi$ of $\omega_{\Delta^n}$ in \eqref{eqn:KE metric of polydisc} with
\begin{equation*}
\abs{\partial\varphi}_{\omega_{\Delta^n}}^2\leq\cC 
\end{equation*}
for some $\cC>0$. If
\begin{enumerate}
\item $\vV=\grad\varphi$ is tangent to $S$, that is, $\vV(p)\in T_p^{(1,0)}S$ for each $p\in S$,
\item $\abs{\partial\varphi}_{\omega_{\Delta^n}}^2\equiv\cC$ on $S$,
\end{enumerate}
then $u$ is of full rank $n$.
\end{lemma}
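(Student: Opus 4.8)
The plan is to argue by contradiction. Writing $f := \abs{\partial\varphi}_{\omega_{\Delta^n}}^2$ and assuming the rank $k$ of $u$ satisfies $k<n$, I will exhibit a coordinate direction transverse to $S$ in which the second-order behavior of $f$ is incompatible with the hypotheses. The starting observation is that the two assumptions combine to make every point of $S$ a genuine interior maximum of the smooth real function $f$: since $f\leq\cC$ throughout the domain of $\varphi$ while $f\equiv\cC$ on $S$, each $p\in S$ is an interior global maximum of $f$. Consequently the complex Hessian $H_{\gamma\bar\delta}:=\partial_\gamma\partial_{\bar\delta}f$ is negative semidefinite at every point of $S$; in particular each diagonal entry satisfies $H_{\gamma\bar\gamma}\leq 0$. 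Because the mixed Christoffel symbols of a Kähler metric vanish, this ordinary complex Hessian coincides with the covariant one $\nabla_{\bar\delta}\nabla_\gamma f$, which is what I will actually compute.

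The second step is to obtain a usable formula for this Hessian. Differentiating $f=\varphi_\mu\varphi^\mu$ and using $\varphi^\mu_{;\gamma}=\delta^\mu_\gamma$ together with the torsion-freeness of $\nabla$ gives the identity $\partial_\gamma f=\varphi_\gamma+\varphi^\mu\varphi_{\gamma;\mu}$, which is exactly the computation carried out in \eqref{eqn:derivative of length} before the constancy of $f$ is invoked. Applying $\nabla_{\bar\delta}$ once more, using $\varphi_{\gamma;\bar\delta}=g_{\gamma\bar\delta}$ and the fact that $g_{\gamma\bar\delta}$ is parallel, I commute the covariant derivatives through the Ricci identity to rewrite the third-order term $\varphi_{\gamma;\mu\bar\delta}$ as a curvature term. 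This yields
\begin{equation*}
H_{\gamma\bar\delta}=g_{\gamma\bar\delta}+\varphi^\mu_{;\bar\delta}\,\varphi_{\gamma;\mu}+\varphi^\mu\varphi^{\bar\sigma}R_{\gamma\bar\sigma\mu\bar\delta}\;.
\end{equation*}
In any holomorphic frame the middle term is, along its diagonal, a sum of squared moduli of the entries of the $(2,0)$-Hessian $\varphi_{\gamma;\mu}$ (using $\varphi_{\mu;\gamma}=\varphi_{\gamma;\mu}$), hence manifestly nonnegative.

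The final step exploits the product geometry of $\Delta^n$. Since $\vV=\grad\varphi$ is tangent to $S$ and, by Proposition~\ref{prop:totally geodesic disc}, the tangent space of $S$ is spanned by the non-constant directions of $u$, the components $\varphi^\alpha$ of $\vV$ vanish in the $n-k$ constant directions. Reordering coordinates so that $u^\alpha$ is constant for $\alpha>k$, pick any such index $\gamma_0>k$, which exists precisely because $k<n$. The metric \eqref{eqn:KE metric of polydisc} is diagonal and its curvature tensor splits across the factors, so $R_{\gamma_0\bar\sigma\mu\bar\gamma_0}$ is nonzero only when $\mu=\sigma=\gamma_0$; the curvature term in $H_{\gamma_0\bar\gamma_0}$ is then a multiple of $\abs{\varphi^{\gamma_0}}^2=0$ and drops out. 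Therefore
\begin{equation*}
H_{\gamma_0\bar\gamma_0}=g_{\gamma_0\bar\gamma_0}+\varphi^\mu_{;\bar\gamma_0}\,\varphi_{\gamma_0;\mu}\geq g_{\gamma_0\bar\gamma_0}>0\;,
\end{equation*}
contradicting $H_{\gamma_0\bar\gamma_0}\leq 0$. Hence no constant direction can occur and $u$ has full rank $n$.

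I expect the main obstacle to be the Hessian computation, specifically the careful commutation of covariant derivatives through the Ricci identity while keeping the index conventions consistent. The conceptual crux, however, is the clean interaction between the diagonal curvature of the polydisc and the vanishing of $\varphi^{\gamma_0}$ in the transverse directions, which makes the curvature term disappear and leaves a strictly positive diagonal Hessian entry. It is worth noting that this argument uses only the maximum principle together with the product structure, and needs neither the precise holomorphic sectional curvature of the factors nor the relation between $\cC$, $k$ and $\cK$ extracted from Theorem~\ref{thm:integral curve of gradient vector field}.
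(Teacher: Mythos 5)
Your proof is correct, and it takes a genuinely different route from the paper's. You run a pointwise second-derivative test: a point $p\in S$ lying in the (open) domain of the local potential is an interior global maximum of $f=\abs{\partial\varphi}_{\omega_{\Delta^n}}^2$, so $df(p)=0$ and the complex Hessian $\partial_\gamma\partial_{\bar\delta}f$ is negative semidefinite there (being the average of the real Hessian over $v$ and $Jv$ at a critical point). Your Hessian identity checks out: it is precisely the untraced version of the Bochner-type formula $\triangle_\omega\abs{\partial\varphi}_\omega^2=\varphi_{\alpha;\beta}\varphi^{\alpha;\beta}+n-\cK\abs{\partial\varphi}_\omega^2$ from \cite{CLS2023} that the paper itself invokes in Proposition~\ref{prop:ball case}, since contracting your curvature term with $g^{\gamma\bar\delta}$ yields the Ricci tensor and $\Ric(\omega)=-\cK\omega$; and for the product metric \eqref{eqn:KE metric of polydisc} the component $R_{\gamma_0\bar\sigma\mu\bar\gamma_0}$ survives only when $\mu=\sigma=\gamma_0$, so the tangency hypothesis (which, via Proposition~\ref{prop:totally geodesic disc}, forces $\varphi^{\gamma_0}=0$ at $p$ in each constant direction $\gamma_0$) annihilates it --- and, as you note, this makes the argument insensitive to the sign convention for curvature. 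The paper instead argues along all of $S$: it inserts the complementary disc $\hat u(\xi)=(0,\ldots,0,\xi,\ldots,\xi)$, restricts $\varphi$ to the resulting bidisc, uses the pluriharmonic ambiguity of local potentials to write $\varphi_D=\psi(\zeta)+\eta(\xi)+f+\bar f$ with $f$ holomorphic, deduces that $\xi\mapsto\abs{\pd{\varphi_D}{\zeta}(\zeta_0,\xi)}^2$ is subharmonic, and applies the one-variable maximum principle to force $\pd{\varphi_D}{\xi}\equiv0$, contradicting $\spd{\varphi_D}{\xi}{\bar\xi}=\mu>0$. The trade-off: the paper's route is elementary (no curvature tensor calculus) and exposes the structural fact that the restricted potential would have to be independent of the transverse factor, whereas yours is sharper in its hypotheses --- it uses tangency of $\vV$ and the equality $\abs{\partial\varphi}_{\omega_{\Delta^n}}^2=\cC$ at a \emph{single} interior point of $S$, not along the whole disc --- and it localizes cleanly, which is convenient since $\varphi$ is only a local potential while $S=u(\Delta)$ is global, a point the paper's formulation glosses over.
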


\begin{proof}
Suppose to the contrary that the rank of $u$ is assumed to be $k<n$.
Taking into account of the automorphisms of $\Delta^n$, $u$ can be written as
\begin{equation*}
u(\zeta)=\{(\underbrace{\zeta,\ldots,\zeta}_k,0,\ldots,0)\;.
\end{equation*}
On the other hand, we have another totally geodesic holomorphic disc $\hat u:\Delta\to\Delta^n$ defined by
\begin{equation*}
\hat u(\xi)=(\underbrace{0,\ldots,0}_k,\xi,\ldots,\xi)\;.
\end{equation*}
Consider the embedding of the bidisc,
\begin{equation*}
v:\Delta^2\hookrightarrow\Delta^n \text{ given by }\iota(\zeta,\xi)=u(\zeta)+\hat u(\xi)=(\zeta,\ldots,\zeta,\xi,\ldots,\xi)
\;,
\end{equation*}
and the induced K\"ahler metric $v^*\omega_{\Delta^n}$ on $\Delta^2$ of the form
\begin{equation*}
v^*\omega_{\Delta^n}
	= \im \lambda d\zeta\wedge d\bar\zeta 
	+\im \mu d\xi\wedge d\bar\xi
\end{equation*}
where 
\begin{equation*}
\lambda=\frac{2k}{\cK\paren{1-\abs{\zeta}^2}^2} \quad\text{and}\quad
\mu=\frac{2(n-k)}{\cK\paren{1-\abs{\xi}^2}^2}
\;.
\end{equation*}
From now on, $u(\Delta)$ and $v(\Delta^2)$ will be denoted by $S$ and $D$.
For simplicity, and to eliminate ambiguity, $\Delta$ and $\Delta^2$ will be identified with (and even denoted as) $S$ and $D$ via the isometric embedding $u$ and $v$.
Furthermore, $\zeta\in\Delta$ and $(\zeta,\xi)\in\Delta^2$ will be used as coordinates of $S$ and $D$, respectively.
So, we have a chain of embeddings
\begin{equation*}
S\underset{u}{\hookrightarrow}
D\underset{v}{\hookrightarrow} \Delta^n \;,
\end{equation*}
where $S$ is also considered as $\set{(\zeta,0):\zeta\in\Delta}$ in $D$.

\medskip

Considering the orthogonal decomposition with respect to $v:\Delta^2\rightarrow\Delta^n$
\begin{equation*}
\vV=\vV^\top+\vV^\bot
\end{equation*}
as in \eqref{eqn:orthogonal decomposition} where $\vV^\top$ is the orthogonal projection to $T^{(1,0)}D$, we have $\vV^\top=\vV$ on $S$ since $S\subset D$ and $\vV$ is tangent to $S$ by the assumption. By \eqref{eqn:gradient length of submanifold}, the potential function $\varphi_D=v^*\varphi$ of $\theta$ satisfies
\begin{equation*}
	\abs{\partial\varphi_D}_{\omega_D}^2
	=
	\abs{\vV^\top}_{\omega_D}^2
	=
	\abs{\vV^\top}_\omega^2
	\leq
	\abs{\vV}_\omega^2\leq\cC
\end{equation*}
and $\abs{\partial\varphi_D}_{\omega_D}^2\equiv\cC$ on $S$, where $\omega_D:=v^*\omega_{\Delta^n}$.
A similar argument can be applied to the embedding $S\hookrightarrow \Delta^n$, which yields that the potential $\varphi_S=u^*\varphi$ (a function of one variable $\zeta$) of the Poincar\'e metric $\im\lambda d\zeta\wedge d\bar\zeta$ has constant gradient length:
\begin{equation*}
\spd{\varphi_S}{\zeta}{\bar\zeta}=\lambda \;, \quad \pd{\varphi_S}{\zeta}\pd{\varphi_S}{\bar\zeta}\frac{1}{\lambda}\equiv\cC.
\end{equation*}
Since 
\begin{equation*}
\eta(\xi)=-\frac{2(n-k)}{\cK}\log(1-\abs{\xi}^2)
\end{equation*}
is a potential of the metric $\im\mu d\xi\wedge d\bar\xi$, the function $\varphi_S(\zeta)+\eta(\xi)$ is a potential of $\omega_D$. Therefore we can write $\varphi_D$ as
\begin{equation*}
\varphi_D(\zeta,\xi)=\psi(\zeta)+\eta(\xi)+f(\zeta,\xi)+\overline{f(\zeta,\xi)}
\end{equation*}
for some local holomorphic function $f$. This implies that
\begin{equation*}
\pd{}{\bar\xi}\pd{\varphi_D}{\zeta}\equiv0,
\end{equation*}
and as a result,  for a fixed $\zeta_0$, the function
\begin{equation}\label{eqn:first of gradient length}
\xi\mapsto \abs{\pd{\varphi_D}{\zeta}(\zeta_0,\xi)}^2 
\end{equation}
is subharmonic.

On the other hand, the gradient vector field of $\varphi_D$ is of the form
\begin{equation*}
\vV^\top=\grad(\varphi_D)=\frac{1}{\lambda}\pd{\varphi_D}{\bar\zeta}\pd{}{\zeta}+\frac{1}{\mu}\pd{\varphi_D}{\bar\xi}\pd{}{\xi} 
\end{equation*}
and $\vV^\top(\zeta,0)=\vV^\top(u(\zeta))$ is in the tangent direction of $S$, i.e. the direction of $\pdl{}{\zeta}$. Therefore we have 
\begin{equation}\label{eqn:holomorphicity of varphi}
\frac{\partial\varphi_D}{\partial \bar\xi}(\zeta,0)
=\frac{\partial\varphi_D}{\partial \xi}(\zeta,0)=0
\;.
\end{equation}

We now consider the length of $\vV^\top$,
\begin{equation*}
\abs{\vV^\top}_{\omega_D}^2 (\zeta_0,\xi)
	=\frac{1}{\lambda(\zeta_0)}\abs{\pd{\varphi_D}{\zeta}(\zeta_0,\xi)}^2
	+\frac{1}{\mu(\xi)}\abs{\pd{\varphi_D}{\xi}(\zeta_0,\xi)}^2
	\;.
\end{equation*}
It  does not exceed $\cC$ and attains its maximum $\cC$ at $(\zeta_0,0)=u(\zeta_0)$ by our assumption.
Together with~\eqref{eqn:holomorphicity of varphi}, this shows that the subharmnic function \eqref{eqn:first of gradient length} achieves its maximum at $(\zeta_0,0)$, which implies that it vanishes identically by the maximum principle.
Hence, the first term of $\abs{\vV^\top}_{\omega_D}^2$ is constant $\cC$; especially the second term vanishes identically for any choice of $\zeta_0$, namely $\pdl{\varphi_D}{\xi}=0$. 
Therefore, we have 
\begin{equation*}
\pd{}{\bar\xi}\pd{\varphi_D}{\xi}\equiv0,
\end{equation*}
but
\begin{equation*}
\quad \pd{}{\bar\xi}\pd{\varphi_D}{\xi}=\spd{\eta}{\bar\xi}{\xi}=\mu>0 \;.
\end{equation*}
This contradiction arises from the assumption $\rank u<n$.
\end{proof}


\begin{proof}[Proof of Proposition~\ref{prop:polydisc uniqueness}]
Let $\varphi:V\subset\Delta^n\to\RR$ be a local potential function of the \KE metric $\omega_{\Delta^n}$ as in \eqref{eqn:KE metric of polydisc} with constant gradient length $\cC$ and let $\vV=\varphi^\alpha\partial_\alpha$ be the gradient vector field of $\varphi$. For a point $p\in V$, there is a holomorphic  integral curve $u:U\subset\Delta\to V$ of $\vV$ passing through $p$ which is  a totally geodesic isometric embedding with respect to the Poincar\'e metric of Gaussian curvature $-\cK_0=-2/\cC$ by Theorem~\ref{thm:integral curve of gradient vector field}. By Proposition~\ref{prop:totally geodesic disc}, $u$ extends globally on $\Delta$ as a totally geodesic holomorphic disc. By applying Lemma~\ref{lem:rank} to $\varphi$ and $S=u(\Delta)$, we obtain $\rank u=n$, so 
\begin{equation*}
\cK_0=\frac{\cK}{n}
\end{equation*}
This implies that $\cC=2n/\cK$ by Proposition~\ref{prop:totally geodesic disc}. 
\end{proof}

\begin{remark}\label{rmk:reducible}
All analogous results in this section hold also for the metric
\begin{equation*}
\tilde\omega
	= \im\sum_{\alpha=1}^n \frac{2}{\cK_\alpha\paren{1-\abs{z^\alpha}^2}^2} dz^\alpha\wedge dz^{\bar\alpha}
\end{equation*}
of $\Delta^n$ for any positive constants $\cK_\alpha$'s. Especially, 
\begin{enumerate}
\item Proposition~\ref{prop:totally geodesic disc} holds by substituting
\begin{equation*}
\cK_u = \paren{\sum_{\alpha=1}^n \frac{1}{\cK_\alpha}}^{-1} 
\end{equation*}
in case of full rank $n$ as derived from the computation of $u^*\tilde\omega$ following \eqref{eqn:pulling-back of metric},
\item then $\cK_0$  in the proof of Proposition~\ref{prop:polydisc uniqueness} coincides with above $\cK_u$,
\item therefore if there is a potential $\varphi$ of $\tilde\omega$ with constant gradient length, then 
\begin{equation*}
\abs{\partial\varphi}_{\tilde\omega}^2\equiv \frac{2}{\cK_u}=\sum_{\alpha=1}^n \frac{2}{\cK_\alpha} \;.
\end{equation*}
\end{enumerate}
\end{remark}


\section{Proof of Theorem~\ref{thm:main KE}}\label{sec:proof}
In this section, we will prove Theorem~\ref{thm:main KE}. With the help of Theorem~\ref{thm:integral curve of gradient vector field} and Proposition~\ref{prop:ball case}, the problems are reduced the curvature computation of a totally geodesic holomorphic disc of full rank in a maximal polydisc. Thus we will first investigate the Gaussian curvature of such disc.

\subsection{The polydisc theorem and the curvature of a totally geodesic holomorphic disc of full rank in a maximal polydisc}
Let $\Omega$ be a bounded symmetric domain of rank $r$ and $\omega$ be the complete \KE metric with Ricci curvature $-\cK$. 

By Wolf's polydisc theorem in \cite{Wolf1972}, for each tangent vector $v$ of $\Omega$, there is a totally geodesic complex submanifold $\Delta^r\hookrightarrow\Omega$ of $(\Omega,\omega)$ such that  $v$ is tangent to $\Delta^r$. We call this $\Delta^r$ a \emph{maximal polydisc} in the direction of $v$. Every maximal polydiscs in $\Omega$ are isometric to each other with respect to the induced metric from $\omega$. Especially if $\Omega$ is irreducible, then the induced metric  is isometric to the complete \KE metric of $\Delta^r$ with some Ricci cuvature $-\cK_m$. In order to get the curvature of totally geodesic holomorphic disc in a maximal polydisc, we will study the value of $\cK_m$ in the irreducible case first. 

\subsubsection{In case of irreducible $\Omega$} If $\Omega$ is irreducible, then 
the metric is given by
\begin{equation*}
\omega = \frac{1}{\cK}dd^c \log K_\Omega(z,z) 
\end{equation*}
where $K_\Omega$ is the Bergman kernel function of $\Omega$. If $\Omega$ is the Cartan/Harish-Chandra embedding, then $K_\Omega$ is given by $K_\Omega = c N_\Omega^{-c_\Omega}$ as we mentioned in the introduction. Then for the maximal polydisc of the form $\Delta^r=\set{(z^1,\ldots,z^r,0,\ldots,0)\in\Omega}$ and its Bergman kernel $K_{\Delta^r}$, we have 
\begin{equation*}
K_\Omega(z,z) = d_1 K_{\Delta^r}(z,z)^{c_\Omega/2} \quad\text{on $\Delta^r$}
\end{equation*}
for some suitable constant $d_1$. Hence, the induced metric of $\Delta^r$ which is given by the potential $\paren{c_\Omega/2\cK}\log K_{\Delta^r}$ is the complete \KE metric with Ricci curvature 
\begin{equation*}
-\cK_m=-\frac{2\cK}{c_\Omega}
\end{equation*}
and a totally geodesic holomorphic disc of full rank $r$ in $\Delta^r$  has constant Gaussian curvature 
\begin{equation*}
\kappa=-\frac{\cK_m}{r}=-\frac{2\cK}{rc_\Omega}=-\frac{2\cK}{\cL_\Omega^2} 
\end{equation*}
by Proposition~\ref{prop:totally geodesic disc}.

\subsubsection{In case of reducible $\Omega$}\label{subsubsec:reducible}
If $\Omega=\Omega_1\times\cdots\times\Omega_s$ with irreducible factors $\Omega_j$, $j=1,\ldots,s$, a maximal polydisc in $\Omega$ is given by
\begin{equation*}
\Delta^r = \Delta^{r_1}\times\cdots\times\Delta^{r_s}
\end{equation*}
where each $\Delta^{r_j}$ is a maximal polydisc of $\Omega_j$. Since $\omega$ is the product metric of the complete \KE metric of $\Omega_j$ with Ricci curvature $-\cK$, the induced metric on $\Delta^r$ is the product metric of the complete \KE metric of $\Delta^{r_j}$ with Ricci curvature 
\begin{equation*}
-\cK_j = -\frac{2\cK}{c_{\Omega_j}} 
\end{equation*}
where $c_{\Omega_j}$ is the genus of $\Omega_j$. By Remark~\ref{rmk:reducible}, we can conclude that a totally geodesic holomorphic disc of full rank $r=r_1+\cdots+r_s$ in a maximal polydisc of $\Omega$ has constant Gaussian curvature
\begin{equation}\label{eqn:curvature of maximal rank}
\kappa= -\paren{\sum_{j=1}^s \frac{r_j}{\cK_j}}^{-1} = -\paren{\sum_{j=1}^s\frac{r_j c_{\Omega_j}}{2\cK}}^{-1} = -\frac{2\cK}{\cL_\Omega^2} \;.
\end{equation}


\subsection{The rigidity of constant gradient length}\label{subsec:proof1}
We will prove Statement (\ref{thm:main KE1}) of Theorem~\ref{thm:main KE}. Suppose that $\varphi:V\subset\Omega\to\RR$ be a potential of $\omega$ with constant gradient length, $\abs{\partial\varphi}_\omega^2\equiv\cC$. Let $p$ be a point in $V$. 

By Theorem~\ref{thm:integral curve of gradient vector field}, we have an integral holomorphic curve $S$ of $\vV=\grad(\varphi)$ passing through $p$ such that $S$ is totally geodesic and $\omega|_S$ is a hermitian metric with constant Gaussian curvature $-2/\cC$. 

Let us consider a maximal polydisc $M$ passing through $p$ in the direction of $\vV(p)$. Both $M$ and $S$ are totally geodesic and $T_pS\subset T_pM$; thus $S$ is also a totally geodesic holomorphic curve of $M$, that is, $S\subset M$. Regarding $\vV$ as a vector field on $M$ (i.e. $\vV=\vV|_M$), we consider the orthogonal decomposition
\begin{equation*}
\vV = \vV^\top+\vV^\bot
\end{equation*}
where $\vV^\top$ the orthogonal projection to $T^{(1,0)}M$ and $\vV^\bot$ is the orthogonal complement of $\vV^\top$ in $T^{(1,0)}\Omega$. 
As shown in Section~\ref{subsec:submanifold}, the tangent vector field $\vV^\top$ of $M$ is the gradient vector field of the potential function $\varphi_M=\varphi|_M$ with respect to  the induced metric $\omega_M=\omega|_M$. Thus we have
\begin{equation*}
\abs{\partial\varphi_M}_{\omega_M}^2
	=\abs{\vV^\top}_\omega^2
	\leq\abs{\vV}_\omega^2=\cC.
\end{equation*}
Since $\vV$ is tangent to $S\subset M$, we have $\vV=\vV^\top$ on $S$, so that
\begin{equation*}
\abs{\vV^\top}_\omega^2
	=\abs{\vV}_\omega^2=\cC \quad\text{on $S$.}
\end{equation*}
Applying Lemma~\ref{lem:rank} and Remark~\ref{rmk:reducible} to $(M,\omega_M)$, the totally geodesic disc $S$ is of full rank $r$. Therefore the Gaussian curvature $\kappa$ of $S$ is the constant given by \eqref{eqn:curvature of maximal rank}. Thus  we have
\begin{equation*}
\cC= -\frac{2}{\kappa} =  \sum_{j=1}^s\frac{r_j c_{\Omega_j}}{\cK} = \frac{\cL_\Omega^2}{\cK}
\end{equation*}
by Theorem~\ref{thm:integral curve of gradient vector field}. This completes the proof. \qed


\subsection{The lower bound of $\norm{\partial\varphi}_{L^\infty}$}\label{subsec:proof2}
We will show Statement~(\ref{thm:main KE2}) of Theorem~\ref{thm:main KE} for the unit ball $\UB^n=\set{z\in\CC^n: \norm{z}<1}$. In this case, the K\"ahler-hyperbolic length $\cL_{\UB^n}$ is given by $\cL_{\UB^n}^2=(n+1)/\cK$.

\begin{proposition}\label{prop:ball case}
Let $\omega$ be the complete \KE metric of $\UB^n$ with Ricci curvature $-\cK$ and $\varphi:\UB^n\to\RR$ be a potential of $\omega$. Then 
\begin{equation*}
\sup\abs{\partial\varphi}_\omega^2 \geq \frac{n+1}{\cK}.
\end{equation*}
\end{proposition}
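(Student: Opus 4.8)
The plan is to reduce the statement to a one-dimensional problem on a complex geodesic and there exploit the boundary rigidity of holomorphic functions. Since $\UB^n$ is contractible, any two potentials of $\omega$ differ by a pluriharmonic function, so I would first write
\begin{equation*}
\varphi = \varphi_0 + h + \bar h, \qquad \varphi_0 = -\frac{n+1}{\cK}\log\paren{1-\norm{z}^2},
\end{equation*}
where $\varphi_0$ is the standard potential of $\omega$ and $h$ is holomorphic on $\UB^n$. A direct computation with the inverse Bergman metric $g^{\alpha\bar\beta}=\frac{\cK}{n+1}\paren{1-\norm{z}^2}\paren{\delta_{\alpha\beta}-z^\alpha z^{\bar\beta}}$ shows that $\varphi_0$ itself already satisfies $\abs{\partial\varphi_0}_\omega^2=\frac{n+1}{\cK}\norm{z}^2$, whose supremum is exactly $\frac{n+1}{\cK}$, approached only in the boundary limit. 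The whole difficulty is therefore to rule out that the holomorphic perturbation $h$ lowers this supremum.

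Next I would pass to a complex geodesic. Fix a unit vector $e\in\CC^n$ and restrict $\varphi$ to the affine disc $S_e=\set{\zeta e:\zeta\in\Delta}$, a totally geodesic holomorphic curve whose induced metric is the Poincar\'e metric of Gaussian curvature $-2\cK/(n+1)$ (as recorded in \S\ref{sec:polydisc}). By \eqref{eqn:gradient length of submanifold}, restriction can only decrease the gradient length, $\abs{\partial(\varphi|_{S_e})}_{\omega_{S_e}}^2\leq\abs{\partial\varphi}_\omega^2$ on $S_e$, so it suffices to prove the following one-dimensional statement: if $\psi$ is any potential of the Poincar\'e metric $\omega_\Delta$ of curvature $-\kappa_0$ on $\Delta$, then $\sup_\Delta\abs{\partial\psi}_{\omega_\Delta}^2\geq 2/\kappa_0$. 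Here $2/\kappa_0=(n+1)/\cK$ and $\varphi|_{S_e}$ is a potential of this form, so applying the statement to it gives $\sup_{\UB^n}\abs{\partial\varphi}_\omega^2\geq\sup_\Delta\abs{\partial(\varphi|_{S_e})}_{\omega_{S_e}}^2\geq (n+1)/\cK$.

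For the one-dimensional claim I would again write $\psi=\psi_0+h+\bar h$ with $\psi_0=-\frac{2}{\kappa_0}\log(1-|\zeta|^2)$ and $h$ holomorphic on $\Delta$, and record the identity
\begin{equation*}
\abs{\partial\psi}_{\omega_\Delta}^2=\frac{1}{a_0}\abs{F}^2,\qquad F(\zeta)=a_0\bar\zeta+(1-|\zeta|^2)h'(\zeta),\qquad a_0=\frac{2}{\kappa_0}.
\end{equation*}
The goal becomes $\sup_\Delta\abs{F}\geq a_0$. Arguing by contradiction, suppose $\abs{F}\leq a_0-\delta$ on $\Delta$ for some $\delta>0$. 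Then $(1-|\zeta|^2)\abs{h'(\zeta)}\geq a_0|\zeta|-\abs{F}\geq\delta-a_0(1-|\zeta|)$, so on the annulus $\set{1-\delta/(2a_0)<|\zeta|<1}$ the function $h'$ is zero-free and $\abs{1/h'(\zeta)}\leq 2(1-|\zeta|^2)/\delta\to0$ uniformly as $|\zeta|\to1$. Thus $1/h'$ is holomorphic on the annulus and extends continuously by $0$ across the unit circle; by the boundary uniqueness theorem (Schwarz reflection across the analytic arc $|\zeta|=1$) this forces $1/h'\equiv0$, which is absurd. Hence $\sup_\Delta\abs{F}\geq a_0$, completing the reduction.

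I expect the one-dimensional estimate to be the main obstacle: the perturbation $h'$ may be unbounded near $\partial\Delta$, so one cannot simply take a naive boundary limit of $F$. The decisive point is that the contradiction hypothesis itself forces a lower growth bound on $\abs{h'}$ near the boundary, converting the problem into a boundary-uniqueness statement for the bounded, zero-free function $1/h'$.
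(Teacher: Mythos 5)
Your proposal is correct, but it proves the proposition by a genuinely different route than the paper. The paper's proof is intrinsic: it invokes the Bochner-type identity $\triangle_\omega\abs{\partial\varphi}_\omega^2=\varphi_{\alpha;\beta}\varphi^{\alpha;\beta}+n-\cK\abs{\partial\varphi}_\omega^2$ from \cite{CLS2023}, applies the maximum principle at an interior maximum (where \eqref{eqn:derivative of length} makes $\grad(\varphi)$ a unit eigenvector of a nonnegative symmetric operator, forcing $\varphi_{\alpha;\beta}\varphi^{\alpha;\beta}\geq1$), and, when the supremum is not attained, uses the potential-rescaling method of \cite{LKH2021,CL2021,CLS2023} along automorphisms $f_j(o)=p_j$ to extract a limit potential that does attain it. You instead reduce to dimension one by restricting to a complex geodesic $S_e$ and using the monotonicity \eqref{eqn:gradient length of submanifold} --- which is exactly the mechanism the paper itself uses in \S\ref{subsec:proof2}, so in effect you observed that only the $n=1$ case of the proposition is needed and gave a new proof of it --- and then you settle the one-dimensional case by elementary complex analysis: writing $\psi=\psi_0+h+\bar h$ (valid since $\Delta$ is simply connected and $\psi-\psi_0$ is harmonic), your identity $\abs{\partial\psi}_{\omega_\Delta}^2=\abs{F}^2/a_0$ with $F(\zeta)=a_0\bar\zeta+(1-\abs{\zeta}^2)h'(\zeta)$ checks out, and the contradiction hypothesis $\abs{F}\leq a_0-\delta$ indeed yields
\begin{equation*}
\paren{1-\abs{\zeta}^2}\abs{h'(\zeta)}\geq a_0\abs{\zeta}-\abs{F}\geq\delta-a_0\paren{1-\abs{\zeta}}\geq\frac{\delta}{2}
\quad\text{on }\set{1-\delta/(2a_0)<\abs{\zeta}<1},
\end{equation*}
so $1/h'$ is holomorphic, zero-free, and tends to $0$ \emph{uniformly} at the circle; the uniform decay is what legitimizes the continuous extension by $0$ and the Schwarz reflection across $\abs{\zeta}=1$, after which the identity theorem forces $1/h'\equiv0$, which is absurd. (Note your key maneuver is sound precisely because the proposition assumes a \emph{global} potential, so $h'$ lives on all of $\Delta$ and the full boundary annulus is available; alternatively, Privalov's uniqueness theorem would finish in place of reflection.) Comparing the two: your argument is more elementary and self-contained --- no Bochner identity, no normal-families compactness --- but it leans on the explicit Poincar\'e potential and the boundary structure of the disc; the paper's argument is coordinate-free, treats all $n$ uniformly, extracts extra information at the maximum point (the eigenvalue/trace inequality), and its rescaling step is a technique that transfers to more general K\"ahler--Einstein settings where no explicit decomposition of potentials is available.
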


\begin{proof}
For the Laplace-Beltrami operator $\triangle_\omega$, locally  written by $\triangle_\omega = g^{\alpha\bar\beta}\partial_{\alpha}\partial_{\bar\beta}$ for a function, we have proved 
\begin{equation*}
\triangle_\omega\abs{\partial\varphi}_\omega^2
	=\varphi_{\alpha;\beta}\varphi^{\alpha;\beta}+n-\cK\abs{\partial\varphi}_\omega^2
	\;
\end{equation*}
in \cite{CLS2023}. Here, $\varphi_{\alpha;\beta}$ stands for the covariant derivative of the gradient vector field $\grad(\varphi)$ as in the proof of Proposition~\ref{prop:on gradient vector field}.

If $\abs{\partial\varphi}_\omega^2$ attains its maximum at $p\in\mathbb B^n$, then $d\abs{\partial\varphi}_\omega^2=0$ at $p$. Equation~\eqref{eqn:derivative of length} implies $\varphi_{\alpha;\beta}\varphi^\beta=-\varphi_\alpha$ at $p$. This means that the tangent vector $\varphi^\alpha\partial_\alpha$ at $p$ is an eigenvector of
\begin{equation*}
L: v^\alpha\partial_\alpha \mapsto \ind{\varphi}{}{\alpha}{;\bar\beta}\ind{\varphi}{}{\bar\beta}{;\gamma}v^\gamma\partial_\alpha
\end{equation*}
with the eigenvalue $1$. This operator $L$ is  nonnegative definite and symmetric, so we obtain
\begin{equation*}
\trace L=\varphi_{\alpha;\beta}\varphi^{\alpha;\beta}\geq 1 \quad\text{at $p$.}
\end{equation*}
Simultaneously, $\triangle_\omega\abs{\partial\varphi}_\omega^2\leq 0$ at $p$, thus we have
\begin{equation*}
\cK\abs{\partial\varphi}_\omega^2\geq  \varphi_{\alpha;\beta}\varphi^{\alpha;\beta}+n\geq n+1 \quad\text{at $p$.}
\end{equation*}
This gives the desired inequality.

\medskip

Suppose that $\abs{\partial\varphi}_\omega^2$ is uniformly bounded but does not attain its maximum. Then we can take a sequence $\set{p_j}$ of points in $\UB^n$ such that
\begin{equation*}
\abs{\partial\varphi}_\omega^2(p_j)\to\sup\abs{\partial\varphi}_\omega^2.
\end{equation*}
For the origin $o$, we have a sequence $\set{f_j}$ of automorphisms of $\UB^n$ satisfying $f_j(o)=p_j$. Due to the method of potential rescaling developed in \cite{LKH2021,CL2021,CLS2023}, the sequence of potentials given
\begin{equation*}
\varphi_j=\varphi\circ f_j-(\varphi\circ f_j)(o)
\end{equation*}
has a subsequence that converges to a potential $\tilde\varphi:\UB^n\to\RR$ in a local $C^\infty$ convergence since $\abs{\partial\varphi}_\omega$ is uniformly bounded. When we assume $\varphi_j\to\tilde\varphi$ passing to a subsequence, it follows that
\begin{equation*}
\abs{\partial\tilde\varphi}_\omega^2(p) = \lim_{j\to\infty}\abs{\partial\varphi_j}_\omega^2(p) =\lim_{j\to\infty}\abs{\partial\varphi}_\omega^2(f_j(p)) \leq\sup\abs{\partial\varphi}_\omega^2
\end{equation*}
for any $p\in\UB^n$ so $\abs{\partial\tilde\varphi}_\omega^2$ attains its maximum value $\abs{\partial\tilde\varphi}_\omega^2(o)=\lim_{j\to\infty}\abs{\partial\varphi}_\omega^2(p_j)=\sup\abs{\partial\varphi}_\omega^2$ at $o$. Therefore, we have $\sup\abs{\partial\varphi}_\omega^2\geq (n+1)/\cK$.
\end{proof}

For the unit disc $\Delta$ with the Poincar\'e metric $\omega_\Delta$ with Gaussian curvature $\kappa$, we have a conclusion that 
\begin{equation*}
\sup_\Delta \abs{\partial\varphi}_{\omega_\Delta}^2 \geq \frac{2}{-\kappa}
\end{equation*}
for any potential $\varphi:\Delta\to\RR$ of $\omega_\Delta$. Using this, we complete the proof of Theorem~\ref{thm:main KE}.

\medskip

\begin{proof}[Proof of Statement~(\ref{thm:main KE2}) in Theorem~\ref{thm:main KE}] Let $\Omega$ be a bounded symmetric domain and $\varphi:\Omega\to\RR$ be a global potential of $\omega$.

Let $\Delta\hookrightarrow\Omega$ be a totally geodesic holomorphic disc with full rank $r$ in a maximal polydisc. On $\Delta$, $\tilde\omega=\omega|_{\Delta}$ is the Poincar\'e metric with constant Gaussian curvature $\kappa = -2\cK/\cL_\Omega^2$ as showned in Section~\ref{subsubsec:reducible}. By Proposition~\ref{prop:ball case}, we have
\begin{equation*}
\sup_\Delta \abs{\partial\tilde\varphi}_{\tilde\omega}^2 \geq \frac{2}{-\kappa}= \frac{\cL_\Omega^2}{\cK}
\end{equation*}
for $\tilde\varphi=\varphi|_\Delta$. Since $\abs{\partial\tilde\varphi}_{\tilde\omega}^2\leq \abs{\partial\varphi}_{\omega}^2$ on $\Delta$, we have
\begin{equation*}
\sup_\Omega\abs{\partial\varphi}_\omega^2 \geq \sup_\Delta \abs{\partial\tilde\varphi}_{\tilde\omega}^2 \geq \frac{\cL_\Omega^2}{\cK} \;.
\end{equation*}
This completes the proof.
\end{proof}


\end{document}